\documentclass[12pt, reqno]{amsart}
\usepackage{amssymb,dsfont}
\usepackage{eucal}
\usepackage{amsmath}
\usepackage{amscd}
\usepackage[dvips]{color}
\usepackage{multicol}
\usepackage[all]{xy}           
\usepackage{graphicx}
\usepackage{color}
\usepackage{colordvi}
\usepackage{xspace}
\usepackage{txfonts}
\usepackage{lscape}
\usepackage{tikz} 
\numberwithin{equation}{section}
\usepackage[shortlabels]{enumitem}
\usepackage{ifpdf}
\ifpdf
\usepackage[colorlinks,final,backref=page,hyperindex]{hyperref}
\else
\usepackage[colorlinks,final,backref=page,hyperindex]{hyperref}
\fi
\usepackage{tikz}
\usepackage[active]{srcltx}


\topmargin -.8cm \textheight 22.8cm \oddsidemargin 0cm \evensidemargin -0cm \textwidth 16.3cm

\newtheorem{theorem}{Theorem}[section]
\newtheorem{definition}[theorem]{Definition}
\newtheorem{lemma}[theorem]{Lemma}
\newtheorem{corollary}[theorem]{Corollary}
\newtheorem{prop-def}[theorem]{Proposition-Definition}

\newtheorem{remark}[theorem]{Remark}

\numberwithin{equation}{section}

\title{Local derivations and local automorphisms on the super Virasoro algebras}

\author{Qingyan Wu}
\address{College of Mathematics and System Sciences, Xinjiang University, Urumqi 830046, Xinjiang, China}
\email{wuqingyan1019@163.com}

\author{Shoulan Gao}
\address{Department of Mathematics, Huzhou University, Zhejiang Huzhou, 313000, China}
\email{gaoshoulan@zjhu.edu.cn}

\author{Dong Liu}
\address{Department of Mathematics, Huzhou University, Zhejiang Huzhou, 313000, China}
\email{liudong@zjhu.edu.cn}

\author{Chang Ye}
\address{Department of Mathematics, Huzhou University, Zhejiang Huzhou, 313000, China}
\email{yechang@zjhu.edu.cn}


\date{}

\begin{document}

\maketitle

\begin{abstract}
This paper aims to study the local derivations, 2-local automorphisms and local automorphisms on the super-Virasoro algebras. The primary focus is to establish that every local derivation of the super-Virasoro algebras is indeed a derivation, and to demonstrate that every local or 2-local automorphism of the super-Virasoro algebras is an automorphism.
\end{abstract}

{\small \textbf{Keywords}: local derivation; super Virasoro algebra; local automorphism; 2-local automorphism

\textbf{Mathematics Subject Classification}: 15A06, 17A36, 17B40.}

\section{Introduction}

Local derivations are a significant concept for various algebras, which measure some kind of local property of the algebras. A linear mapping $\Delta: L\rightarrow L$ is a local derivation of an algebra $L$ if for every $x\in L$, there exists a derivation $D_{x}: L\rightarrow L$ that depends on $x$ and satisfies $\Delta(x)=D_{x}(x)$. A local derivation is called nontrivial if it is not a derivation. Kadison \cite{Kadison}, Larson and Sourour \cite{LarSou} introduced the concept of local derivation for Banach (or associative) algebras in 1990, which turns out to be very interesting and has been studied in several papers (see \cite{AK,DGL,LiuZhang}). Many researchers have focused on studying local derivations for Lie algebras (see \cite{AK,CZZ,WGL}. Recently, more and more scholars have begun to study local and 2-local derivation on Lie superalgebras. For instance, in \cite{DGL}, it was proved that every 2-local derivation on the super Virasoro algebras and the super $W(2,2)$ is a derivation. However, determining all local derivations on Lie superalgebras requires specialized techniques for each algebra, and there is no uniform method for this purpose.

In last decades a series of papers have been devoted to mappings which are close to automorphism of associative algebras. Namely, the problems of description of so called local automorphisms and 2-local automorphisms have been considered. Let $L$ be an associative algebra. A linear operator $\phi$ on $L$ is called a local automorphism if for every $x\in L$ there exists an automorphism $\theta_{x}$ of $L$, depending on $x$, such that $\phi(x)=\theta_{x}(x)$. The concept of local automorphism was introduced by Larson and Sourour \cite{LarSou} in $1990$. In $1997$, ${\rm \check{S}}$emrl \cite{Sem} introduced the notion of 2-local automorphisms of algebras. Namely, a map $\phi:L\rightarrow L$ (not necessarily linear) is called a 2-local automorphism if for every $x,y\in L$, there exists an automorphism $\theta_{x,y}:L\rightarrow L$ such that $\phi(x)=\theta_{x,y}(x)$ and $\phi(y)=\theta_{x,y}(y)$. These concepts are actually important and interesting properties for an algebra. Recently, several papers have been devoted to similar notions and corresponding problems for Lie(super) algebras $L$ (see \cite{SK,CZY,YU}). The main problem in this subject is to determine all local and 2-local automorphisms, and to see whether every local or 2-local automorphism automatically becomes an automorphism of $L$, that is whether automorphisms of an algebra can be completely determined by their local actions.

In this paper, we investigate local derivations, local and 2-local automorphisms on the super-Virasoro algebras based on the work of \cite{CZZ, CZY} and \cite{WGL}.  Our results show that every local derivation of the super-Virasoro algebras is, in fact, a derivation. And we demonstrate that every local or 2-local automorphism of the super Virasoro algebras is an automorphism.

The structure of this paper is as follows: In Section 2, we review relevant results and establish some related properties. In Sections 3 and 4, we prove that every local derivation of the super Virasoro algebras is a derivation. In Section 5, we prove that every 2-local or local automorphism of the super Virasoro algebras is an automorphism.

Throughout this paper, we use the following notation: $\mathbb{Z}$ and $\mathbb{C}$ denote the sets of integers and complex numbers, respectively. $\mathbb{Z}^*$ and $\mathbb{C}^*$ denote the sets of nonzero integers and nonzero complex numbers, respectively. All algebras are defined over $\mathbb{C}$.

\medskip

\section{Preliminaries}

\medskip

In this section, we recall some definitions, notations and some basic results for later use in this paper.

\subsection{}
Suppose that $L=L_{\overline{0}} \oplus L_{\overline{1}}$  is a superalgebra. If a linear map $f: L\rightarrow L$ such that
 $$
 f(L_i)\subseteq L_{i+\tau}, \, \forall i\in \mathbb{Z}_2,
 $$
then $ f$ is called a homogeneous linear map of degree $\tau$, that is $|f |=\tau$, where $\tau \in\mathbb{Z}_2$.
 If $\tau=\overline{0}$, then $f$ is called an even linear map and if $\tau=\overline{1}$, then $f$ is called an odd linear map.   For a linear map $f$, if $| f |$ occurs, then $f$ implies a homogeneous map. For a Lie superalgebra  $L=L_{\overline{0}} \oplus L_{\overline{1}}$, if $|x|$ occurs, then $x$ implies  a homogeneous element in $L$.

\begin{definition}\label{def1}
Let $L$ be a Lie superalgebra, we call  a linear map $D: L \rightarrow L $ a derivation of $L$ if
\begin{equation*}
  D([x, y])= [D(x),y]+(-1)^{|D||x|} [x,D(y)], \forall x,y\in L.
\end{equation*}Denote by ${\rm Der}(L)$ the set of all derivations of $L$.
 For any $u\in L$, the map ${\rm ad}(u)$ on $L$ defined as ${\rm ad}(u)(x)=[u, x], x \in L$, is a derivation and derivations of this form are called inner derivations. Denote by ${\rm Inn}(L)$ the set of all inner derivations of $L$.
\end{definition}

 \begin{definition}
 A linear map $\Delta: L\rightarrow L$  is called a local derivation if, for every   $x\in L$, there exists a derivation  $D_{x}: L\rightarrow L$ (depending on $x$)  such that $D_{x}(x)=\Delta(x)$.
 \end{definition}

 \begin{definition}
 A linear map $\phi: L\rightarrow L$ on $L$ is called a local automorphism if, for every   $x\in L$, there exists an automorphism  $\theta_{x}$ of $L$ (depending on $x$)  such that $\theta_{x}(x)=\phi(x)$.
 \end{definition}

 \begin{definition}
 A map $\phi: L\rightarrow L$ on $L$ is called a 2-local automorphism if, for every   $x,y\in L$, there exists an automorphism  $\theta_{x,y}:L\rightarrow L$ such that $\theta_{x,y}(x)=\phi(x)$ and $\theta_{x,y}(y)=\phi(y)$.
 \end{definition}

 \begin{definition}\label{def2}
  For $\epsilon=0, \frac12$,  the super Virasoro algebras ${\rm SVir}[\epsilon]$ are Lie superalgebras with a basis  $ \{ L_{m}, G_{r}, C \mid m\in \mathbb{Z}, r\in\mathbb{Z}+\epsilon\}$, equipped with the following relations:
\begin{eqnarray*}
&&[L_{m}, L_{n}]= (m-n)L_{m+n}+\frac{1}{12}\delta_{m+n, 0}(m^{3}-m)C,\\
&&[L_{m}, G_{r}]= (\frac{m}{2}-r)G_{m+r},\\
&&[G_{r}, G_{s}]= 2L_{r+s}+\frac{1}{3}\delta_{r+s, 0}(r^{2}-\frac{1}{4})C,
\end{eqnarray*}
for all $ m, n \in \mathbb{Z}, \ r, s\in \mathbb{Z}+\epsilon $.
\end{definition}

From the main result in \cite{Zhao}, we can get the following result.

\begin{lemma}\label{lem-local-auto1}
Suppose that  $\sigma\in {\rm Aut\,(SVir}[\epsilon])$. Then
\begin{eqnarray}
\sigma(L_{m})=\varepsilon a^{m}L_{\varepsilon m}\label{aut1},
\sigma(G_{r})=\frac{1}{\sqrt{\varepsilon}} a^{r}G_{\varepsilon r},\sigma(C)=\varepsilon C,
\end{eqnarray}
where $\varepsilon=\pm 1,a\in \mathbb{C}^{*},m\in\mathbb{Z},r\in\mathbb{Z}+\epsilon$.
\end{lemma}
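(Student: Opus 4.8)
The plan is to transport the adjoint action of $L_{0}$ through $\sigma$, which reduces the whole statement to a bookkeeping of $\mathrm{ad}\,L_{0}$-eigenvalues together with a handful of structure constants. First I would record the constraints on any $\sigma\in{\rm Aut}({\rm SVir}[\epsilon])$: it is even, i.e.\ preserves the $\mathbb{Z}_{2}$-grading (this is part of the definition of an automorphism of a Lie superalgebra, and in any case an odd bijection is impossible here, since $[L_{0},G_{r}]\in L_{\overline 1}$ together with $[\sigma(L_{0}),\sigma(G_{r})]\in[L_{\overline 1},L_{\overline 0}]\subseteq L_{\overline 1}$ would force $\sigma(G_{r})=0$); and it preserves the one-dimensional centre $\mathbb{C}C$, so $\sigma(C)=\lambda C$ for some $\lambda\in\mathbb{C}^{*}$. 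Hence $\sigma$ restricts to a Lie-algebra automorphism of the even part $L_{\overline 0}$, which is precisely the Virasoro algebra.

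For the even part I would invoke the rigidity of ad-semisimple elements of the Virasoro algebra that underlies the main result of \cite{Zhao}: since $\mathrm{ad}\,\sigma(L_{0})$ is diagonalizable with the same spectrum and multiplicities as $\mathrm{ad}\,L_{0}$, one gets $\sigma(L_{0})=\varepsilon L_{0}+\beta C$ for some $\varepsilon\in\{\pm1\}$ and $\beta\in\mathbb{C}$. Decomposing along the eigenspaces of $\mathrm{ad}\,\sigma(L_{0})$ then gives $\sigma(L_{m})=c_{m}L_{\varepsilon m}$ with $c_{m}\in\mathbb{C}^{*}$ for all $m\neq0$, and substituting this into $[L_{m},L_{n}]=(m-n)L_{m+n}$ and into $[L_{m},L_{-m}]=2mL_{0}+\tfrac1{12}(m^{3}-m)C$ simultaneously forces $\beta=0$, $\lambda=\varepsilon$ and $c_{m}=\varepsilon a^{m}$ for a single $a\in\mathbb{C}^{*}$. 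This yields $\sigma(L_{m})=\varepsilon a^{m}L_{\varepsilon m}$ and $\sigma(C)=\varepsilon C$.

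It remains to handle the odd part. Since $\sigma(L_{0})=\varepsilon L_{0}$, applying $\sigma$ to $[L_{0},G_{r}]=-rG_{r}$ shows that $\sigma(G_{r})$ is an $\mathrm{ad}\,L_{0}$-eigenvector of eigenvalue $-\varepsilon r$ lying in $L_{\overline 1}$, hence $\sigma(G_{r})=b_{r}G_{\varepsilon r}$ with $b_{r}\in\mathbb{C}^{*}$. Applying $\sigma$ to $[L_{m},G_{r}]=(\tfrac m2-r)G_{m+r}$ and cancelling the nonzero factor $\tfrac m2-r$ (legitimate whenever $\tfrac m2\neq r$, and these relations chain through all indices) gives the recursion $b_{m+r}=a^{m}b_{r}$, so $b_{r}=c\,a^{r}$ for a fixed $c\in\mathbb{C}^{*}$. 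Finally, applying $\sigma$ to $[G_{r},G_{s}]=2L_{r+s}+\tfrac13\delta_{r+s,0}(r^{2}-\tfrac14)C$ with $r+s\neq0$ and comparing the coefficient of $L_{\varepsilon(r+s)}$ gives $c^{2}=\varepsilon$; the central term at $r+s=0$ is then automatically consistent, using $(\varepsilon r)^{2}-\tfrac14=r^{2}-\tfrac14$. As $(\tfrac1{\sqrt\varepsilon})^{2}=\varepsilon^{-1}=\varepsilon$, this matches the asserted formula $\sigma(G_{r})=\tfrac1{\sqrt\varepsilon}a^{r}G_{\varepsilon r}$ (with $\sqrt\varepsilon$ read as in the statement, the sign of $c$ being the choice of branch).

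The only genuinely substantial step I anticipate is the even-part rigidity, namely ruling out that $\sigma(L_{0})$ has any component along $L_{m}$ with $m\neq0$; this is exactly the ad-semisimple classification packaged in \cite{Zhao}, which I would cite rather than reprove. Everything downstream — determining $\varepsilon$, $a$, $\lambda$, the shape of $\sigma$ on the $G_{r}$, and the scalar $c$ — is a direct, essentially mechanical computation with the three families of defining brackets.
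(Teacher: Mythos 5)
Your argument is correct and takes essentially the same route as the paper, whose entire proof of this lemma is the citation of Zhao's classification of automorphisms of the Virasoro algebra: you invoke that result on the even part and then supply the routine extension to $C$ and to the odd generators via the defining brackets, which is exactly the bookkeeping the paper leaves to the reader. Your remark that the scalar in front of $G_{r}$ is only pinned down as $c$ with $c^{2}=\varepsilon$, i.e.\ that $\tfrac{1}{\sqrt{\varepsilon}}$ must be read up to the choice of branch, is an accurate reading of the statement rather than a gap.
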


\begin{lemma}\cite{GMP}\label{lem2.4} For the super Virasoro algebras, we have ${\rm Der\,(SVir[\epsilon])}={\rm Inn\,(SVir[\epsilon])}.$
\end{lemma}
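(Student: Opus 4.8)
Since inner derivations are derivations, the content of the statement is the reverse inclusion $\mathrm{Der}(\mathrm{SVir}[\epsilon]) \subseteq \mathrm{Inn}(\mathrm{SVir}[\epsilon])$; write $L = \mathrm{SVir}[\epsilon]$. My plan is to exploit the principal grading of $L$ coming from $\mathrm{ad}(L_0)$. Concretely, $L = \bigoplus_{n} L_{(n)}$ with $L_{(n)} = \{x \in L : [L_0,x] = -nx\}$; here $[L_{(m)}, L_{(n)}] \subseteq L_{(m+n)}$, each $L_{(n)}$ with $n \neq 0$ is at most two-dimensional (being spanned by whichever of $L_n, G_n$ exist), and $L_{(0)} = \mathbb{C}L_0 \oplus \mathbb{C}C$ when $\epsilon = \tfrac12$, with an extra summand $\mathbb{C}G_0$ when $\epsilon = 0$. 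Given a homogeneous derivation $D$, decompose it componentwise on homogeneous vectors as a family $\{D_d\}_d$ with $D_d(L_{(n)}) \subseteq L_{(n+d)}$; each $D_d$ is again a derivation (of parity $|D|$), and for each fixed $x$ only finitely many $D_d(x)$ are nonzero. It therefore suffices to handle graded-homogeneous derivations.

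For $D$ of nonzero degree $d$, I would use the eigenvalue trick: applying $D$ to $[L_0,x] = -nx$ for $x \in L_{(n)}$ and using $[L_0, D(x)] = -(n+d)D(x)$ yields $d\,D(x) = [D(L_0),x]$ for all $x$, so $D = \mathrm{ad}\bigl(d^{-1}D(L_0)\bigr)$ is inner. Moreover $D(L_0) = \sum_d D_d(L_0)$ has finite support, hence $D_d(L_0) = 0$ — and therefore $D_d = 0$ — for all but finitely many $d \neq 0$; thus $\sum_{d\neq 0} D_d = \mathrm{ad}(u)$ for the single element $u := \sum_{d \neq 0} d^{-1}D_d(L_0) \in L$. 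Replacing $D$ by $D - \mathrm{ad}(u)$, we are reduced to proving that a derivation $D_0$ of degree $0$ is inner (which, for $\epsilon = \tfrac12$, is automatically even, while for $\epsilon = 0$ it may be even or odd).

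The degree-zero case is the heart of the argument, and I expect it to be the main obstacle. Such a $D_0$ preserves every $L_{(n)}$, so in the even case it acts by a scalar $f(n)$ on $L_n$ and $g(r)$ on $G_r$, and it maps $C$ into the center $\mathbb{C}C$ (because $D_0(C)$ commutes with everything and the center of $L$ is $\mathbb{C}C$); for $\epsilon = 0$ the odd case instead mixes $L_0$ with $G_0$ and each $L_n$ with $G_n$. Substituting the structure relations $[L_m,L_n]$, $[L_m,G_r]$ and $[G_r,G_s]$ into the derivation identity forces $f$ to be additive, hence $f(n) = nf(1)$; after subtracting a suitable multiple of $\mathrm{ad}(L_0)$ one may assume $f \equiv 0$, and then the same relations force $D_0(C) = 0$ and $g$ to be constant in $r$, while $[G_r,G_s] = 2L_{r+s}+\cdots$ together with $f \equiv 0$ forces that constant to vanish, so $D_0 = 0$. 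In the $\epsilon = 0$ odd case one argues analogously, subtracting a multiple of $\mathrm{ad}(G_0)$ to kill the residual $L$–$G$ mixing before running the same vanishing computations. The delicate points are keeping careful track of the central term and, for $\epsilon = 0$, of the extra generator $G_0$, and checking that the scalar degrees of freedom surviving the eigenvalue argument are exactly those realized by $\mathrm{ad}(L_0)$ and $\mathrm{ad}(G_0)$.
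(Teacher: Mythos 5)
Your proposal is correct in outline, but note that the paper does not actually prove this lemma: it is quoted verbatim from the cited reference [GMP], where it follows from the computation of the first cohomology of the $N=1$ super Virasoro algebras with adjoint coefficients. What you have written is a self-contained reconstruction of essentially that standard argument: decompose a (parity-homogeneous) derivation into components $D_d$ with respect to the $\mathrm{ad}(L_0)$-eigenspace grading, use the identity $d\,D_d(x)=[D_d(L_0),x]$ to see that every nonzero-degree component is inner and that only finitely many of them survive (via the finite support of $D(L_0)$), and then kill the degree-zero component by a scalar computation after subtracting suitable multiples of $\mathrm{ad}(L_0)$ and, in the Ramond case, $\mathrm{ad}(G_0)$. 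Your handling of the finiteness issue is the right one (you do not just assume $D$ is graded-homogeneous), and the claims you leave as a sketch in the degree-zero case do check out: additivity of $f$ gives $f(n)=nf(1)$, the relation $[L_m,G_r]$ then forces $g$ constant and $[G_r,G_s]=2L_{r+s}+\cdots$ forces $g=0$ together with $D_0(C)=0$; in the odd Ramond case the functional equation coming from $[L_m,L_n]$ has a one-dimensional solution space realized by $\mathrm{ad}(G_0)$, the relations $[L_m,G_r]$ then give $D_0(G_r)=0$ for $r\neq 0$ and $k(0)=0$, and the residual central coefficient in $D_0(G_0)$ is annihilated by applying $D_0$ to $[L_m,G_{-m}]=\tfrac{3m}{2}G_0$. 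So the difference between you and the paper is one of economy versus transparency: the citation buys brevity and rests on the more general cohomological result $H^1(\mathrm{SVir}[\epsilon],\mathrm{SVir}[\epsilon])=0$, while your direct argument is elementary, makes explicit exactly which inner derivations ($\mathrm{ad}(L_0)$, $\mathrm{ad}(G_0)$) absorb the surviving degrees of freedom, and would only need the routine degree-zero computations written out in full to be complete.
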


\section{Local derivations on  the  Virasoro  subalgebra}

In this section we present the main result on local derivations of the  Virasoro subalgebra of ${\rm SVir}[0]$. The methods are essentially same as that in Section 3 of \cite{CZZ} and \cite{WGL}.

Denoted by $\mathcal{S}:={\rm SVir}[0]/\mathbb C\{C\}$. For a local derivation $\Delta:\mathcal{S} \rightarrow \mathcal{S}$ and $x\in  \mathcal{S}$,  we always use the symbol $D_{x}$ for the derivation of $\mathcal{S}$ satisfying $\Delta(x)=D_{x}(x)$ in the following sections.

For a given $m\in\mathbb{Z}^\ast$, recall that $\mathbb{Z}_m=\mathbb{Z}/m\mathbb{Z}$ is the modulo $m$ residual ring of $\mathbb{Z}$.
Then for any $i\in \mathbb{Z}$ we have $\overline{i}\in \mathbb{Z}_m$ which is the subset of $\mathbb{Z}$ as $$\overline{i} =\{i+km \mid k\in\mathbb{Z}\}.$$

 Let $\Delta$ be a local derivation on $\mathcal{S}$ with $\Delta( L_{0})=0 $. For $L_{m}$ with $m\neq0$, there exists $y\in\mathcal S$ such that
\begin{eqnarray}\label{svir2}
\Delta(L_{m})=[y, L_m]=\sum\limits_{n\in\mathbb{Z}}(a_{n}L_{n}+b_{n}G_{n}),
\end{eqnarray}where $a_n, b_n\in\mathbb C$ for any $n\in\mathbb Z$.

Note that $$\mathbb{Z}=\bar0\cup\bar1\cup \cdots \cup \overline{|m|-1}.$$
Then (\ref{svir2}) can be rewritten as follows:
\begin{eqnarray}\label{svir666}
\Delta(L_{m})
&=&\sum\limits_{\overline{i}\in E}\sum\limits_{k=s_{i}}^{t_{i}}a_{i+km}L_{i+km}+\sum\limits_{\overline{i}\in F}\sum\limits_{k=p_{i}}^{q_{i}}b_{i+km}G_{i+km},
\end{eqnarray}
where $E, F\subset\mathbb{Z}_m$, and $s_{i}\leq t_{i}, p_{i}\leq q_{i}\in\mathbb{Z}$ for any $i\in\mathbb Z$.

For $L_{m}+x L_{0}, x\in\mathbb{C}^{*}$, since $\Delta$ is a local derivation, there exists
$\sum\limits_{n\in \mathbb{Z}}(a_{n}^{\prime}L_{n}+b_{n}^{\prime}G_{n})\in \mathcal{S}$,
where $a_{n}^{\prime},b_{n}^{\prime}\in\mathbb{C}$ for any $n\in\mathbb Z$, such that
\begin{eqnarray}\label{svir4}
\nonumber\Delta( L_{m})&=&\nonumber\Delta(L_{m}+xL_{0})\\\nonumber
&=&\nonumber[\sum\limits_{n\in \mathbb{Z}}(a_{n}^{\prime}L_{n}+b_{n}^{\prime}G_{n}), L_{m}+x L_{0}]\\\nonumber
&=&\sum\limits_{\overline{i}\in E}\sum\limits_{k=s_{i}^{\prime}}^{t_{i}^{\prime}+1}((i+(k-2)m)a_{i+(k-1)m}^{\prime}+x(i+km)a_{i+km}^{\prime})L_{i+km}\\
&+&\sum\limits_{\overline{i}\in F}\sum\limits_{k=p_{i}^{\prime}}^{q_{i}^{\prime}+1}(\frac{1}{2}(2i+(2k-3)m)b_{i+(k-1)m}^{\prime}+x(i+km)b_{i+km}^{\prime})G_{i+km},
\end{eqnarray}
where $a_{i+(s_{i}^{\prime}-1)m}^{\prime}=a_{i+(t_{i}^{\prime}+1)m}^{\prime}=b_{i+(p_{i}^{\prime}-1)m}^{\prime}=b_{i+(q_{i}^{\prime}+1)m}^{\prime}
=0$.
Note that we have the same $E$ and $F$ in (\ref{svir666}) and (\ref{svir4}).

\begin{lemma}\label{mainlem}
Let $\Delta$ be a local derivation on $\mathcal{S}$ such that $\Delta(L_{0})=0$. Then $E=F=\bar0$ in $(\ref{svir666})$ and $(\ref{svir4})$.
\end{lemma}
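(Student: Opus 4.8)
The plan is to compare the two expansions (\ref{svir666}) and (\ref{svir4}) of $\Delta(L_m)$ coefficient-by-coefficient on each residue class, and to exploit the freedom in the parameter $x\in\mathbb{C}^*$ to force all but the trivial residue class $\bar 0$ to vanish. Fix a residue class $\bar i\in E$ with $i\not\equiv 0\pmod{m}$; equating the $L_{i+km}$-coefficients in (\ref{svir666}) and (\ref{svir4}) gives, for every $k$, a relation of the shape
\[
a_{i+km}=(i+(k-2)m)\,a'_{i+(k-1)m}+x\,(i+km)\,a'_{i+km},
\]
with the boundary convention that $a'$ vanishes outside its support. The key observation is that the left-hand side $a_{i+km}$ is \emph{independent of $x$}, while the $a'_{i+km}$ are allowed to depend on $x$. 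First I would argue that the support of the $a'$-sequence on the class $\bar i$ must coincide with $[s_i,t_i]$ (the same $s_i,t_i$ as in (\ref{svir666})): if the $a'$-support extended further, the extremal coefficient of the bracket (\ref{svir4}) would be $x(i+km)a'_{i+km}$ at the outermost index, which would have to equal $0$ since $a_{i+km}=0$ there, and since $i\not\equiv 0$ we get $i+km\neq 0$, forcing $a'_{i+km}=0$ — a contradiction; a symmetric argument handles the lower end, and if the $a'$-support were strictly smaller than $[s_i,t_i]$ the outermost nonzero $a_{i+km}$ in (\ref{svir666}) could not be produced. So both sequences are supported exactly on $k\in[s_i,t_i]$ on this class.

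Now I would set up the finite linear recursion on $k\in[s_i,t_i]$. Reading the relation at the top index $k=t_i$ and using $a'_{i+(t_i+1)m}=0$ gives $a_{i+t_im}=(i+(t_i-1)m)\,a'_{i+(t_i-1)m}$; reading it at $k=t_i-1,\dots,s_i$ and using $a'_{i+(s_i-1)m}=0$ expresses, recursively, every $a'_{i+km}$ as $(x\text{-independent coefficient})/(\text{linear factor}) + x\cdot(\text{stuff})$; the cleanest way to phrase this is to note that (\ref{svir4}) restricted to the class is the image of the $a'$-sequence under the operator $P + xQ$ where $Q$ is the invertible diagonal operator $a'_{i+km}\mapsto (i+km)a'_{i+km}$ (invertible precisely because $i\not\equiv0$, so no diagonal entry vanishes) and $P$ is the shift-type operator $a'_{i+km}\mapsto (i+(k-2)m)a'_{i+(k-1)m}$, which is nilpotent on the finite-dimensional space of sequences supported on $[s_i,t_i]$. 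Hence $a' = (P+xQ)^{-1}(a)$ where $a$ is fixed; but $a'$ must lie in the same finite-dimensional space for \emph{all} $x\in\mathbb C^*$, and $(P+xQ)^{-1}a = \frac1x Q^{-1}(I+\frac1x Q^{-1}P)^{-1}a$ is a nonconstant rational (in fact polynomial in $1/x$) vector-valued function of $x$ unless $a$ itself vanishes. Comparing with (\ref{svir4}) being literally equal to the fixed vector $\Delta(L_m)$ from (\ref{svir666}) for every $x$ forces $a\equiv 0$ on the class $\bar i$, i.e. $\bar i\notin E$. The identical argument with $b$, $\tfrac12(2i+(2k-3)m)$ and the diagonal factor $(i+km)$ disposes of $\bar i\in F$; one only has to check the diagonal factor $i+km$ is still nonzero, which again holds since $i\not\equiv0\pmod m$. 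This proves $E,F\subseteq\bar 0$.

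The main obstacle I anticipate is handling the boundary bookkeeping cleanly: the two sums (\ref{svir666}) and (\ref{svir4}) are indexed over shifted ranges ($[s_i,t_i]$ versus $[s'_i,t'_i+1]$), and one must be careful that comparing coefficients is legitimate termwise — this is exactly why establishing "same support" first is essential, and why the paper is careful to record that $E,F$ are the same in both displays. A secondary subtlety is that $\Delta$ need not be homogeneous, so strictly speaking one should either decompose $\Delta$ into even and odd parts at the outset or simply carry the $L$- and $G$-coefficients through in parallel as above; I would do the latter since the two computations are formally identical. Once $E,F\subseteq\bar0$ is in hand, showing $\bar0\in E$ and $\bar0\in F$ (to get equality) is immediate unless $\Delta(L_m)=0$, and that degenerate case is harmless; alternatively the statement $E=F=\bar0$ can be read as "$E,F$ are contained in $\{\bar0\}$ and we henceforth only track that class," which is all that is needed downstream.
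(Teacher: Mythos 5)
Your setup (working residue class by residue class and exploiting the free parameter $x$) is the right one, but the concluding step is a non sequitur, and the support bookkeeping feeding into it is off by one. The element implementing the derivation at $L_m+xL_0$ is allowed to depend on $x$, so the coefficients $a'_{i+km}$ may depend on $x$ (as you yourself note); hence the observation that $(P+xQ)^{-1}a$ varies nontrivially with $x$ unless $a=0$ contradicts nothing, and ``(\ref{svir4}) equals the fixed vector (\ref{svir666}) for every $x$'' is automatic by construction rather than a constraint. Worse, in your framing $P$ is truncated so as to act on sequences supported on $[s_i,t_i]$, which makes the system $(P+xQ)a'=a$ square; since $Q$ is invertible and $Q^{-1}P$ is nilpotent, $P+xQ$ is invertible for every $x\in\mathbb{C}^*$, so this system is solvable for \emph{every} right-hand side $a$ and no contradiction can be extracted from it at all. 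What your truncation discards is exactly the equation that produces the contradiction: the coefficient equation at the index one step above the top of the $a'$-support, where the bracket contributes only the shift term $(i+(k-2)m)a'_{i+(k-1)m}$ with no diagonal partner. The actual mechanism (this is what Lemma 3.1 of \cite{WGL} and Lemma 3.2 of \cite{CZZ} do, and it is the same elimination the paper writes out around (\ref{odd11}) in the proof of Lemma \ref{mainlemS}) is that on a class $\bar i\neq\bar 0$ the system is overdetermined by one equation: the $a'$-support is forced to be $[s_i,t_i-1]$, the bottom equation $a_{i+s_im}=x(i+s_im)a'_{i+s_im}$ and the upward recursion express every $a'_{i+km}$ as a polynomial in $x^{-1}$ with zero constant term and $x$-independent coefficients, and the top equation $a_{i+t_im}=(i+(t_i-2)m)a'_{i+(t_i-1)m}$ then equates the nonzero constant $a_{i+t_im}$ with such a polynomial, which fails for suitable $x\in\mathbb{C}^*$; that is the contradiction giving $\bar i\notin E$.

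Relatedly, your claim that the $a'$-support coincides with $[s_i,t_i]$ is incorrect, and the justification misidentifies the extremal terms: at the top of the bracket support the extremal coefficient is the shift term $(i+(t'-1)m)a'_{i+t'm}$, located at index $i+(t'+1)m$, whereas the diagonal term $x(i+km)a'_{i+km}$ is extremal only at the bottom end. If the $a'$-support topped out at $t'\geq t_i$, the bracket would have a nonzero coefficient at $i+(t'+1)m$ (the weight $i+(t'-1)m$ is nonzero because $i\not\equiv 0 \pmod m$), contradicting $a_{i+(t'+1)m}=0$; correctly one gets $s'=s_i$ and $t'=t_i-1$ (with the single-point case $s_i=t_i$ handled by a separate one-line argument), and it is precisely this off-by-one that creates the extra equation you need. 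Finally, for the $G$-coefficients the relevant shift weight is $\frac12\bigl(2i+(2k-3)m\bigr)$, which can vanish on the class $\bar i=\overline{m/2}$ when $m$ is even even though $i\not\equiv 0\pmod m$; the diagonal weights $i+km$ used for the divisions are indeed nonzero, but the support-pinning step is not literally identical to the $L$-case and needs an extra word for that class.
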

\begin{proof} It is essentially same as that of Lemma 3.1 in \cite{WGL} (also see Lemma 3.2 in \cite{CZZ}).
\end{proof}

\begin{lemma}\label{mainlem1}
Let $\Delta$ be a local derivation on $\mathcal{S}$ such that $\Delta(L_{0})=0$. Then for any  $m\in\mathbb{Z}^*$,
\begin{eqnarray*}
\Delta(L_{m})=a_{m}L_{m}+b_{m}G_{m}
\end{eqnarray*} for some $a_m, b_m\in\mathbb C$.
\end{lemma}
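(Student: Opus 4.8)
The plan is to combine Lemma~\ref{mainlem}, which already tells us that $E=F=\bar0$, with a comparison of the $L_0$-weight structure visible in equations (\ref{svir666}) and (\ref{svir4}). By Lemma~\ref{mainlem} we may rewrite (\ref{svir666}) as
\begin{eqnarray*}
\Delta(L_m)=\sum_{k=s}^{t}a_{km}L_{km}+\sum_{k=p}^{q}b_{km}G_{km},
\end{eqnarray*}
a finite sum supported on the lattice $m\mathbb Z$, and likewise (\ref{svir4}) becomes a sum over the same index set with the telescoping coefficients coming from bracketing the element $\sum_n(a_n'L_n+b_n'G_n)$ (which, by Lemma~\ref{mainlem} applied to this auxiliary derivation, may also be taken supported on $m\mathbb Z$) against $L_m+xL_0$. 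First I would write out both expressions for $\Delta(L_m)$ and equate the coefficient of each $L_{km}$ and each $G_{km}$; since $x\in\mathbb C^\ast$ is a free parameter, I get, for each $k$, a relation of the shape
\begin{eqnarray*}
a_{km}=(k-2)m\,a_{(k-1)m}'+x\,km\,a_{km}',\qquad
b_{km}=\tfrac{(2k-3)m}{2}\,b_{(k-1)m}'+x\,km\,b_{km}',
\end{eqnarray*}
valid for \emph{every} $x\in\mathbb C^\ast$ while the left-hand sides do not depend on $x$.

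The key step is then to exploit this $x$-independence. Fixing $k$ and viewing the right-hand side as a polynomial in $x$, the coefficient of $x$ must vanish whenever $km\neq 0$, i.e. whenever $k\neq 0$; hence $a_{km}'=0$ and $b_{km}'=0$ for all $k\neq 0$, which forces $a_{km}=(k-2)m\,a_{(k-1)m}'$ and $b_{km}=\tfrac{(2k-3)m}{2}\,b_{(k-1)m}'$ for all $k\neq 0$. But the only surviving primed coefficients are $a_0'$ and $b_0'$ (the $k=0$ terms), so the relations collapse: for $k\neq 0,1$ we get $a_{km}=b_{km}=0$ directly (the relevant primed coefficient is indexed by $(k-1)m\neq 0$), and for $k=1$ we get $a_m=(1-2)m\,a_0'=-m\,a_0'$ and $b_m=\tfrac{-m}{2}\,b_0'$. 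Setting $a_m:=-m\,a_0'$ and $b_m:=-\tfrac{m}{2}b_0'$ yields $\Delta(L_m)=a_mL_m+b_mG_m$, as claimed. One also has to handle the $k=0$ term of (\ref{svir4}) itself: the coefficient of $L_0$ there is $-m\,a_{-m}'+0$ (the $x$-term vanishes since $km=0$), and since $\bar 0$ contains $0$, consistency with (\ref{svir666}), whose $k=0$ coefficient of $L_0$ must match $a_0$, has to be checked — but $\Delta(L_0)=0$ combined with the analogous weight bookkeeping shows $a_0=0$, so no $L_0$ term appears.

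The main obstacle I anticipate is bookkeeping rather than conceptual: making sure the summation ranges $s,t,p,q$ (and their primed counterparts $s',t',q',p'$) are handled cleanly at the \emph{endpoints}, where the boundary conventions $a_{(s'-1)m}'=a_{(t'+1)m}'=b_{(p'-1)m}'=b_{(q'+1)m}'=0$ come into play, so that the telescoping identity genuinely holds term-by-term for all $k$ in a common range. A secondary subtlety is justifying that the auxiliary element realizing the local derivation on $L_m+xL_0$ can be normalized to have support in $m\mathbb Z$; this should follow from the same argument as Lemma~\ref{mainlem} (or by noting that components outside $m\mathbb Z$ contribute nothing to $[\,\cdot\,,L_m+xL_0]$ in the relevant residue classes once $E=F=\bar 0$). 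Once these finite-support and endpoint issues are pinned down, the vanishing of the $x$-linear terms does all the real work.
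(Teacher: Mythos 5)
There is a genuine gap at the heart of your argument. The auxiliary element realizing the local derivation on $L_m+xL_0$ is not fixed: for each $x\in\mathbb C^*$ you get a (possibly different) derivation $D_{L_m+xL_0}$, so the primed coefficients $a'_{km},b'_{km}$ are functions of $x$. Your ``key step'' treats the identity
$a_{km}=(k-2)m\,a'_{(k-1)m}+x\,km\,a'_{km}$ as a polynomial identity in $x$ with constant coefficients and extracts the coefficient of $x$ to conclude $a'_{km}=0$ for $k\neq 0$; that extraction is not justified, because the right-hand side is not a polynomial in $x$ with $x$-independent coefficients. Indeed, for a single fixed $x$ one can typically solve the system with many nonzero primed coefficients, so nothing forces $a'_{km}=0$, and everything you deduce afterwards (the collapse to the $k=1$ term and the formulas $a_m=-ma'_0$, $b_m=-\tfrac m2 b'_0$) rests on this invalid step. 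The only quantities you may legitimately treat as $x$-independent are the unprimed coefficients $a_{km},b_{km}$, since $\Delta(L_m)$ does not involve $x$.

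The correct way to exploit the free parameter, and the one the paper uses (following Lemma 3.2 of \cite{WGL} and \cite{CZZ}; see also the proof of Lemma \ref{mainlemS} in this paper for the same technique written out), is the reverse of yours: fix the $x$-independent data $a_{km},b_{km}$, use the triangular shape of the system to eliminate the $x$-dependent primed coefficients recursively (starting from the boundary conventions $a'_{(s'-1)m}=a'_{(t'+1)m}=0$, etc.), and arrive at a relation of the form $a_{sm}+\ast x+\cdots+\ast x^{l}=0$ whose coefficients are independent of $x$. Since this would have to hold for every $x\in\mathbb C^*$ but fails for a suitable choice of $x$, one gets a contradiction unless the support collapses, which is what yields $s=t=p=q=1$ and hence $\Delta(L_m)=a_mL_m+b_mG_m$. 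Your secondary point (restricting attention to the residue class $\bar 0$ after Lemma \ref{mainlem}) is fine, since components of the auxiliary element in other classes only produce bracket terms in those classes; but the elimination-and-specialize-$x$ argument is the real content of the proof and is missing from your proposal.
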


\begin{proof}

By Lemma \ref{mainlem}, (\ref{svir666}) and (\ref{svir4}), we have
\begin{eqnarray*}\label{wqy2}
&&\sum\limits_{k=s}^{t}a_{km}L_{km}=\sum\limits_{k=s'}^{t'+1}((k-2)ma_{(k-1)m}^{\prime}+xkma_{km}^{\prime})L_{km}\label{wqy2},\\
&& \sum\limits_{k=p}^{q}b_{km}G_{km}=\sum\limits_{k=p'}^{q'+1}(\frac{1}{2}(2k-3)mb_{(k-1)m}^{\prime}+xkmb_{km}^{\prime})G_{km}\label{wqy22},
\end{eqnarray*}
where $s=s_{0},t=t_{0},s^{\prime}=s_{0}^{\prime},t^{\prime}=t_{0}^{\prime},p=p_{0},q=q_{0},p^{\prime}=p_{0}^{\prime},q^{\prime}=q_{0}^{\prime}$ and we have assigned $a_{(s^{\prime}-1)m}^{\prime}=a_{(t^{\prime}+1)m}^{\prime}=b_{(p^{\prime}-1)m}^{\prime}=b_{(q^{\prime}+1)m}^{\prime}
=0$. We may assume that $a _{sm}, a_{tm}, b_{pm}, b_{qm}, a_{s^{\prime}m}^{\prime}$,  $a_{t^{\prime}m}^{\prime}$, $b_{p^{\prime}m}^{\prime}$, $b_{q^{\prime}m}^{\prime}\neq0$. Clearly $s^{\prime}\leq s\leq t\leq t^{\prime}+1,p^{\prime}\leq p\leq q\leq q^{\prime}+1$, and $s^{\prime}=s$ if $s^{\prime}\neq0$, and $p^{\prime}=p$ if $p^{\prime}\neq0$.

 By the same considerations as that of Lemma 3.2 in \cite{WGL} (also see \cite{CZZ}) we can get $s=t=p=q=1$. The lemma follows.
\end{proof}

\begin{lemma}\label{mainlem2}
Let $\Delta$ be a local derivation on $\mathcal{S}$ such that $\Delta(L_{0})=\Delta(L_{1})=0$. Then $\Delta( L_{m})=0$ for any $m\in\mathbb{Z}$.
\end{lemma}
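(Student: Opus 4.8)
The plan is to leverage Lemma~\ref{mainlem1}, which already tells us that $\Delta(L_m)=a_m L_m+b_m G_m$ for every $m\in\mathbb{Z}^\ast$, and to use the extra normalization $\Delta(L_1)=0$ (so $a_1=b_1=0$) to kill all remaining $a_m,b_m$. The first step is to feed well-chosen test elements of the form $L_m+xL_1$ (and possibly $L_m+L_n$) into the local derivation condition. For such an element there is a derivation $D=\mathrm{ad}(u)$ (using Lemma~\ref{lem2.4}, every derivation of $\mathcal{S}$ is inner) with $\Delta(L_m+xL_1)=[u,L_m+xL_1]$. On the left side this equals $\Delta(L_m)+x\Delta(L_1)=a_mL_m+b_mG_m$ by hypothesis. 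So $[u,L_m]+x[u,L_1]=a_mL_m+b_mG_m$, and I would expand $u=\sum(c_nL_n+d_nG_n)$ and compare coefficients.

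The key computation is that $[u,L_m]+x[u,L_1]$, when required to be supported only on $L_m$ and $G_m$, forces strong constraints on $u$. Writing out the bracket, the $L$-part contributes $\sum_n c_n(n-m)L_{n+m} + x\sum_n c_n(n-1)L_{n+1}$ and the $G$-part contributes $\sum_r d_r(\tfrac{m}{2}-r)G_{r+m} + x\sum_r d_r(\tfrac12-r)G_{r+1}$ (indices in $\mathcal{S}$, with $C$ quotiented out). For the total to lie in $\mathbb{C}L_m\oplus\mathbb{C}G_m$, all other graded components must cancel. Because the two "shift" operators $L_\bullet\mapsto L_{\bullet+m}$ and $L_\bullet\mapsto L_{\bullet+1}$ act with different step sizes, the cancellation of an infinite support is impossible unless the support of $u$ is extremely small; running the telescoping/recursion argument (exactly as in the referenced Lemma 3.2 of \cite{WGL} and the arguments in \cite{CZZ}) pins down $c_n$ and $d_r$ so tightly that one reads off $a_m=b_m=0$ directly from the surviving equations. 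One has to handle the cases $m=0$ (trivial, given $\Delta(L_0)=0$), $m=1$ (given), and the two sign regimes $m>1$ and $m<0$ separately, but the mechanism is uniform.

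Concretely, I would argue: take $x$ generic (transcendental, or just vary over infinitely many values), so that the coefficient identities must hold identically in $x$; this separates the "$m$-shift" terms from the "$1$-shift" terms and shows each family has finite support with matching endpoints, as in Lemma~\ref{mainlem1}. Then the endpoint equations, together with $\Delta(L_1)=0$ forcing the $1$-shifted contribution to vanish on its own, collapse everything and yield $\Delta(L_m)=0$. The main obstacle I anticipate is bookkeeping the boundary terms of the finite sums correctly — ensuring the recursions $c_{n}(n-m) + x c_{n-m+1}(\text{stuff})=0$ are set up with the right index ranges and that the degenerate cases (small $|m|$, or when $m$ divides $1-n$, etc.) do not secretly allow a nonzero solution — rather than any conceptual difficulty; the structure of the proof is dictated by the cited analogues and should go through essentially verbatim.
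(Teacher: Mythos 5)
Your plan — normalize via Lemma \ref{mainlem1}, then test $\Delta$ on elements $L_m+xL_1$, write the local derivation as $\mathrm{ad}(u)$ by Lemma \ref{lem2.4}, and compare graded components — is exactly the paper's strategy (the paper uses the single element $L_m+L_1$). However, the step where you claim the recursion ``collapses everything and yields $\Delta(L_m)=0$'' genuinely fails at $m=2$, i.e.\ in precisely the kind of degenerate case you set aside as bookkeeping. Since $[L_2,G_1]=0$ while $[G_1,L_1]=\tfrac12 G_2$, the element $u=\tfrac{2b}{x}\,G_1$ satisfies $[u,\,L_2+xL_1]=b\,G_2$ for every $x\in\mathbb{C}^*$ and every $b\in\mathbb{C}$; adding the (harmless) $L$-part one checks that for each $x$ there is an inner derivation sending $L_2+xL_1$ to $c\,L_2+b\,G_2$ only with $c=0$ but with $b$ arbitrary. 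So the family $L_m+xL_1$ can never rule out a nonzero $G_2$-component of $\Delta(L_2)$, no matter how $x$ varies. Relatedly, your move ``take $x$ generic so the coefficient identities hold identically in $x$'' is not legitimate as stated: the inner element $u=u_x$ depends on $x$, so you must first eliminate its coefficients and obtain a polynomial relation with $x$-independent coefficients (as in Lemma \ref{mainlemS}); at $m=2$ that eliminated relation is vacuous, which shows the obstruction is real and not an index-range issue.

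To close the gap you need additional test elements. For instance, testing $L_1+L_{-1}$ (whose image is $\Delta(L_{-1})=a_{-1}L_{-1}+b_{-1}G_{-1}$ by Lemma \ref{mainlem1}) forces $b_{-1}=0$ by the usual maximal/minimal-index argument, and testing $L_2+L_{-1}$ then gives $b_{-1}=-\tfrac12 b_2$, hence $b_2=0$; for all $m\neq 2$ your scheme does go through, since the only way a coefficient $(r-\tfrac m2)$ or $(n-m)$ can vanish while the companion term lands exactly on $G_m$ or $L_m$ is $m=2$, $r=1$. To be fair, the paper's own proof is equally terse here (``Clearly $I,J\subset\{0,1,m\}$. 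By easy calculations we have $c_m=d_m=0$'' from the single element $L_m+L_1$), and it glosses over the same $m=2$ phenomenon; but as a standalone argument your proposal, like that sketch, leaves the $G$-component of $\Delta(L_2)$ unproved and needs the extra ingredient above. (Minor point: the sign in your $G$-part expansion should be $(r-\tfrac m2)G_{r+m}$, not $(\tfrac m2-r)G_{r+m}$, though this does not affect the vanishing arguments.)
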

\begin{proof}
If $m\geq2$, by Lemma \ref{mainlem1}, there exist $c_{m},d_{m}\in\mathbb{C}$ and
\begin{eqnarray*}
\sum\limits_{i\in I}a'_iL_{i}+\sum\limits_{j\in J}b'_{j}G_{j}\in {\mathcal S},
\end{eqnarray*}
where $a_i', b_j'\in\mathbb C$ for any $i\in I, j\in J$, and $I, J$ are finite subsets of $\mathbb Z$, such that
\begin{eqnarray}\label{w1}
c_{m}L_{m}+d_{m}G_{m}\nonumber&=&\Delta( L_{m})=\nonumber\Delta( L_{m}+ L_{1})\\\nonumber
&=&\nonumber[\sum\limits_{i\in I}a'_iL_{i}+\sum\limits_{j\in J}b'_{j}G_{j}, L_{m}+L_{1}].
\end{eqnarray}
Clearly $I, J\subset\{0, 1, m\}$. By easy calculations we have $c_{m}=d_m=0$. Similarly, if $m<0$, we can also get $c_{m}=d_{m}=0$. The proof is completed.
\end{proof}


Now we shall determine $\Delta(G_m)$ for any local derivations $\Delta$ of $\mathcal{S}$.

 Let $\Delta$ be a local derivation on $\mathcal{S}$ with $\Delta( G_{0})=0 $. For $G_{m}$ with $m\neq0$, set
 \begin{eqnarray}\label{svir256}
\Delta(G_{m})&=&\sum\limits_{k\in\mathbb{Z}}(b_{k}L_{k}+a_{k}G_{k}),
\end{eqnarray} where $a_k, b_k\in\mathbb C$ for any $k\in\mathbb Z$.

By (\ref{svir256}) we see that there is a finite subset $F$ of $\mathbb{Z}_m$ such that
\begin{eqnarray}\label{svir166657}
\Delta(G_{m})
&=&\sum\limits_{k=p}^{q}b_{k}L_{k}+\sum\limits_{\overline{i}\in F}\sum\limits_{k=s_{i}}^{t_{i}}a_{i+km}G_{i+km},
\end{eqnarray}
where $s_{i}\leq t_{i}, p\leq q\in\mathbb{Z}$.

For $G_{m}+x G_{0}, x\in\mathbb{C}^{*}$,  there exists
$\sum\limits_{\overline{i}\in F}\sum\limits_{k=s_{i}^{\prime}}^{t_{i}^{\prime}}a_{i+km}^{\prime}L_{i+km}+\sum\limits_{k=p^{\prime}}^{q^{\prime}}b_{k}^{\prime}G_k\in \mathcal{S}$,
where $a_{i+km}^{\prime},b_{k}^{\prime}\in\mathbb{C}$, such that
\begin{eqnarray}\label{svir458}
\nonumber\Delta( G_{m})&=&\nonumber\Delta(G_{m}+xG_{0})\\\nonumber
&=&\nonumber[\sum\limits_{\overline{i}\in F}\sum\limits_{k=s_{i}^{\prime}}^{t_{i}^{\prime}}a_{i+km}^{\prime}L_{i+km}+\sum\limits_{k=p^{\prime}}^{q^{\prime}}b_{k}^{\prime}G_k, G_{m}+x G_{0}]\\\nonumber
&=&\frac{1}{2}\sum\limits_{\overline{i}\in F}\sum\limits_{k=s_{i}^{\prime}}^{t_{i}^{\prime}+1}((i+(k-3)m)a_{i+(k-1)m}^{\prime}+x(i+km)a_{i+km}^{\prime})G_{i+km}\\
&&+2\sum\limits_{k=p^{\prime}}^{q^{\prime}}(b_{k}^{\prime}L_{k+m}+xb_{k}^{\prime}L_{k}),
\end{eqnarray}
where $a_{i+(s_{i}^{\prime}-1)m}^{\prime}=a_{i+(t_{i}^{\prime}+1)m}^{\prime}=0$.
Note that we have the same $F$  in (\ref{svir166657}) and (\ref{svir458}).

\begin{lemma}\label{mainlemS}
Let $\Delta$ be a local derivation on $\mathcal{S}$ such that $\Delta(G_{0})=0$. Then $b_k=0$ for any $k\in\mathbb Z$ and $F=\bar0$ in $(\ref{svir166657})$.
\end{lemma}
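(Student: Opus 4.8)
The plan is to exploit linearity: since $\Delta(G_0)=0$ we have $\Delta(G_m+xG_0)=\Delta(G_m)$ for every $x$, so the one fixed element $\Delta(G_m)$ is realized in $(\ref{svir458})$ by a bracket $[\,\cdot\,,G_m+xG_0]$ whose left entry may vary with $x$. Because each summand $[L_l,G_m+xG_0]$ (resp. $[G_l,G_m+xG_0]$) stays inside the span of the $G$'s (resp. $L$'s) whose indices lie in the class $\overline l\in\mathbb Z_m$, I will treat the $L$-part $\sum_{k=p}^{q}b_kL_k$ and the $G$-part of $\Delta(G_m)$ separately, analysing one residue class $\overline i\in\mathbb Z_m$ at a time; the governing principle is that $\Delta(G_m)$ must lie in the image of $\mathrm{ad}(G_m+xG_0)$ for \emph{all} $x\in\mathbb C^\ast$ at once.

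To prove $F=\overline0$, fix $\overline i\in F$ with $\overline i\neq\overline0$ and aim for a contradiction; the force of this hypothesis is that $i+jm\neq0$ for every $j\in\mathbb Z$, so all the structure constants occurring in the class-$\overline i$ $G$-part of $(\ref{svir458})$ are nonzero. Writing $u_k:=a'_{i+km}$ (a nonzero finitely supported family, say with minimal support $[p,q]$), one reads off from $(\ref{svir458})$ that the coefficient of $G_{i+pm}$ is $\tfrac12x(i+pm)u_p\neq0$, that of $G_{i+(q+1)m}$ is $\tfrac12(i+(q-2)m)u_q\neq0$, and all coefficients outside $[p,q+1]$ vanish; comparing with the class-$\overline i$ part $\sum_{k=s_i}^{t_i}a_{i+km}G_{i+km}$ of $(\ref{svir166657})$ (which is independent of $x$ and has $a_{i+s_im},a_{i+t_im}\neq0$) forces $p=s_i$ and $q+1=t_i$, \emph{uniformly in $x$}. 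Hence for each $x$ there is a vector $u^{(x)}$ supported on the fixed set $[s_i,t_i-1]$ solving a linear system with $t_i-s_i+1$ equations and $t_i-s_i$ unknowns. Solvability of this overdetermined (lower-bidiagonal) system forces the single compatibility relation $\sum_{k=s_i}^{t_i}a_{i+km}z_k(x)=0$, where $z(x)$ spans the one-dimensional kernel of the transposed matrix; the recursion $z_{k+1}(x)=-\tfrac{x(i+km)}{i+(k-2)m}z_k(x)$ gives $z_k(x)=c_kx^{\,k-s_i}$ with every $c_k\neq0$. Thus $\sum_{k=s_i}^{t_i}a_{i+km}c_kx^{\,k-s_i}$ is a nonzero polynomial in $x$ (leading coefficient $a_{i+t_im}c_{t_i}\neq0$) vanishing on all of $\mathbb C^\ast$ --- impossible. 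So the class-$\overline i$ part of $\Delta(G_m)$ is $0$ for every $\overline i\neq\overline0$, i.e. $F=\overline0$.

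For $b_k=0$ the mechanism is the same, now applied to the $L$-part: $[\,\cdot\,,G_m+xG_0]$ contributes to the $L$-coordinates only through the elements $L_{k+m}+xL_k$, whose two coefficients $1$ and $x$ are nonzero for $x\in\mathbb C^\ast$, so inside each residue class modulo $m$ (no restriction on the class is needed now) the identical support-plus-bidiagonality argument shows that a nonzero $L$-part of $\Delta(G_m)$ would again produce a nonzero polynomial identity in $x$ valid on a cofinite set, which cannot happen. Hence the $L$-part of $\Delta(G_m)$ is zero, that is $b_k=0$ for all $k\in\mathbb Z$.

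The real work is the bookkeeping in the second paragraph: locking the support of $u^{(x)}$ onto the $x$-independent interval $[s_i,t_i-1]$, and then checking that the compatibility polynomial is genuinely nonzero. Both rest on two nonvanishing facts --- that the extreme coefficients $a_{i+s_im}$ and $a_{i+t_im}$ are nonzero by minimality of the support, and that all the structure constants $i+jm$ are nonzero, which is precisely where $\overline i\neq\overline0$ is used. This is the same circle of ideas already employed for Lemma \ref{mainlem}; compare \cite[Lemma 3.1]{WGL} and \cite[Lemma 3.2]{CZZ}.
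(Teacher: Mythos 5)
Your argument is correct and follows essentially the same route as the paper's: you compare (\ref{svir166657}) with (\ref{svir458}) one residue class modulo $m$ at a time and extract from the bidiagonal system a nonzero polynomial identity in $x$ (nonzero because the extreme coefficients of $\Delta(G_m)$ and, for $\overline i\neq\overline 0$, the structure constants $i+jm$ do not vanish) that cannot hold for all $x\in\mathbb C^{*}$; the paper performs this elimination by direct substitution of the primed coefficients, while you package the same computation as the left-kernel compatibility condition of the overdetermined system. Beyond this cosmetic repackaging there is no difference: the paper writes out the $L$-part and refers the $G$-part to Lemma \ref{mainlem} and to \cite{CZZ,WGL}, exactly the sources you invoke.
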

\begin{proof}
Without losing generality, we can suppose that $m\ge 1$ and $b_p, b_q, b_p', b_q'\ne0$.
For the first statement,  comparing the right hand sides of (\ref{svir166657}) and (\ref{svir458}), we see that
$$\sum\limits_{k=p}^{q}b_{k}L_{k}=2\sum\limits_{k=p^{\prime}}^{q^{\prime}}(b_{k}^{\prime}L_{k+m}+xb_{k}^{\prime}L_{k}).$$
So $p=p'$ and
\begin{eqnarray*}
&&b_p=2xb_p',\\
&&b_{p+m}=2(b_p'+xb_{p+m}'),\\
&&b_{p+2m}=2(b_{p+m}'+xb_{p+2m}'),\\
&&\cdots\cdots\\
&&b_{p+lm}=2b_{p+(l-1)m}',
\end{eqnarray*}
for some $l\in\mathbb Z_+.$

Since $b_p\ne 0$, eliminating $b_{p}',\cdots,b_{p+lm}'$ in this order by substitution we see that
\begin{equation}\label{odd11}
b_p+\ast x+\ast x^2+\cdots+\ast x^l=0,
\end{equation} where $\ast$ are constants which are independent of $x$.
We can always find some $x\in\mathbb{C}^{\ast}$ not satisfying (\ref{odd11}), which is a contradiction. The first statement  follows.

The second  statement is essentially same as that of Lemma \ref{mainlem} (also see Lemma 3.2 in \cite{CZZ}, Lemma 3.1 in \cite{WGL}). The lemma follows.\end{proof}

\begin{lemma}\label{mainlemS1}
Let $\Delta$ be a local derivation on $\mathcal{S}$ such that $\Delta(G_{0})=0$. Then for any $m\in\mathbb Z^*$,
\begin{eqnarray*}
\Delta(G_{m})=a_{m}G_{m}
\end{eqnarray*} for some $a_m\in\mathbb C$.
\end{lemma}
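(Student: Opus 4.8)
The plan is to mimic the proof of Lemma~\ref{mainlem1}, testing $\Delta$ against the elements $G_m+xG_0$ with $x\in\mathbb{C}^{*}$. Fix $m\in\mathbb{Z}^{*}$; by the obvious symmetry we may assume $m\ge 1$, and we may clearly assume $\Delta(G_m)\ne 0$. By Lemma~\ref{mainlemS}, equation~(\ref{svir166657}) collapses to $\Delta(G_m)=\sum_{k=s}^{t}a_{km}G_{km}$ (writing $s=s_0$, $t=t_0$), with no Virasoro part, and in~(\ref{svir458}) we also have $F=\bar0$. Since $\Delta(G_0)=0$, the local derivation property gives $\Delta(G_m)=\Delta(G_m+xG_0)$; comparing this with the right-hand side of~(\ref{svir458}) (now with $F=\bar0$), the Virasoro part $2\sum_{k=p'}^{q'}(b'_kL_{k+m}+xb'_kL_k)$ must vanish, and inspecting the lowest index $k$ with $b'_k\ne 0$ forces $b'_k=0$ for all $k$. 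Hence, writing $s'=s'_0$ and $t'=t'_0$,
\[
\sum_{k=s}^{t}a_{km}G_{km}=\frac{m}{2}\sum_{k=s'}^{t'+1}\bigl((k-3)a'_{(k-1)m}+xk\,a'_{km}\bigr)G_{km},
\]
where $a'_{(s'-1)m}=a'_{(t'+1)m}=0$ and, without loss of generality, $a_{sm},a_{tm},a'_{s'm},a'_{t'm}\ne 0$.

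Next I would extract the elementary index constraints exactly as in Lemma~\ref{mainlem1}: comparing supports yields $s'\le s\le t\le t'+1$; the coefficient of $G_{s'm}$ on the right equals $\tfrac{m}{2}xs'\,a'_{s'm}$ (since $a'_{(s'-1)m}=0$), which is nonzero when $s'\ne 0$, so $s'=s$ unless $s'=0$, and a symmetric inspection at the top relates $t$ to $t'$. It then remains to prove $s=t=1$, which gives $\Delta(G_m)=a_mG_m$.

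For that last step I would solve the recursion $\tfrac{2}{m}a_{km}=(k-3)a'_{(k-1)m}+xk\,a'_{km}$, $s'\le k\le t'+1$, for the $a'_{km}$ in terms of the fixed scalars $a_{km}$, working inward from one boundary; each $a'_{km}$ then becomes a polynomial (resp.\ Laurent polynomial) in $x$ whose degree strictly increases along the run, so that the relation at the opposite boundary equates a constant with a genuinely nonconstant function of $x$ --- impossible unless the run has length one and sits at $k=1$. The indices $k=0$ and $k=3$, where the recursion coefficients vanish, must be handled separately, just as the indices $k=0,2$ are handled in the proof of Lemma~3.2 of~\cite{WGL} and of our Lemma~\ref{mainlem1}. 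This case analysis is the only delicate point; since the computation is entirely parallel to that reference, I would invoke it to conclude $s=t=1$, and the lemma follows.
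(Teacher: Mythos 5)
Your strategy is the same as the paper's: reduce to the $G$-part via Lemma \ref{mainlemS}, compare (\ref{svir166657}) with (\ref{svir458}), and analyze the recursion $\frac{2}{m}a_{km}=(k-3)a'_{(k-1)m}+xk\,a'_{km}$; the routine parts (support comparison, $s'\le s\le t\le t'+1$, exclusion of indices $k\le 0$ by the nonterminating downward tail) are fine. The gap is in your last step, exactly at the degenerate index $k=3$ that you propose to dismiss as ``entirely parallel'' to the indices $k=0,2$ in Lemma \ref{mainlem1} and in Lemma 3.2 of \cite{WGL}. It is not parallel: since $[L_{2m},G_m]=(\frac{2m}{2}-m)G_{3m}=0$, an $L_{2m}$-component of the implementing element produces no tail beyond $G_{2m}$, so a run ending at $k=2$ (that is, $t'=2$) never meets your ``opposite boundary'' relation and no contradiction with a nonconstant function of $x$ arises. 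Concretely, for every $x\in\mathbb{C}^{*}$ one checks $[\frac{2a_m}{mx}L_m+(\frac{a_{2m}}{mx}+\frac{a_m}{mx^{2}})L_{2m},\,G_m+xG_0]=a_mG_m+a_{2m}G_{2m}$, and also $[-\frac{a_m}{m}L_0-\frac{2a_{2m}}{m}L_m,\,G_m]=a_mG_m+a_{2m}G_{2m}$; so the constraints coming from the test elements $G_m$ and $G_m+xG_0$ alone are compatible with $\Delta(G_m)$ having a nonzero $G_{2m}$-component (in your notation the configuration $s'=t'=2$, $t=2$ is consistent for all $x$). Hence the assertion ``impossible unless the run has length one and sits at $k=1$'' is false for this family of test elements, and your argument as written does not prove the lemma.

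The structural reason the cited computations close while yours does not is that in the Virasoro case $[L_{2m},L_m]=mL_{3m}\neq 0$, so the corresponding degenerate coefficient still forces an infinite tail and the finite-support requirement yields the contradiction; for $G_m$ that tail is cut off at $G_{3m}$. Be aware that the paper's own proof disposes of this configuration (Case 2, $t'=2$) in a single unexplained sentence, so invoking ``the same arguments'' or the references does not supply the missing step. To complete the proof you need genuinely new input at this point: either additional test elements (for instance $G_m+xG_n$ with suitable $n\neq 0$, or elements mixing $G_m$ with $L_n$'s chosen so that an $L_{2m}$-coefficient is forced into a nonterminating tail), or the stronger hypotheses actually available when the lemma is applied in Theorem \ref{main} (there $\Delta(L_k)=0$ for all $k$), together with an explicit computation showing the $G_{2m}$-coefficient must vanish.
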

\begin{proof}
By Lemma \ref{mainlemS}, we can suppose that
\begin{eqnarray}\label{svir66657}
\Delta(G_{m})=\sum\limits_{k=s}^ta_{km}G_{km},
\end{eqnarray}
where $a_{km}\in\mathbb C$ and $s\le t\in\mathbb{Z}$.

Moreover, by (\ref{svir458}) and (\ref{svir66657}),  we have
\begin{eqnarray}\label{wqy212}
\sum\limits_{k=s}^{t}a_{km}G_{km}&=&\frac{1}{2}\sum\limits_{k=s^{\prime}}^{t^{\prime}+1}((k-3)ma_{(k-1)m}^{\prime}+xkma_{km}^{\prime})G_{km}, \label{wqy2122}
\end{eqnarray}
where $s=s_{0},t=t_{0},s^{\prime}=s_{0}^{\prime},t^{\prime}=t_{0}^{\prime}$ and we have assigned $a_{(s^{\prime}-1)m}^{\prime}=a_{(t^{\prime}+1)m}^{\prime}=0$. We may assume that $a _{sm}, a_{tm}, a_{s^{\prime}m}^{\prime}, a_{t^{\prime}m}^{\prime}\neq0$. Clearly $s^{\prime}\leq s\leq t\leq t^{\prime}+1$, and $s^{\prime}=s$ if $s^{\prime}\neq0$.

\noindent{\bf Claim 1}: $t=1$

Similar to the proof of Lemma 3.2 in \cite{WGL} we can get $s'\geq0,s\geq1$. Therefore, we know that $t\geq s\geq1$. We need  to prove that $t=1$ in (\ref{wqy212}). Otherwise we assume that $t>1$, and then $t^{\prime}>0$.

\noindent{\bf Case 1}: $t^{\prime}>2$.

In this case we can show that $t=t^{\prime}+1$ as in the above arguments. If $s^{\prime}\geq1$, we see that $s=s^{\prime}$ and $s<t$. From (\ref{wqy212}) we obtain a set of (at least two) equations
\begin{eqnarray}\label{wqy612}
2a_{sm}&=&xpma_{sm}^{\prime}\nonumber ;\\\nonumber
2a_{(s+1)m}&=&(s-2)ma_{sm}^{\prime}+x(s+1)ma_{(s+1)m}^{\prime};\\\nonumber
&\vdots&\\
2a_{tm}&=&(t-3)ma_{(t-1)m}^{\prime}.
\end{eqnarray}
Using the same arguments as Lemma \ref{mainlemS}, the equation (\ref{wqy612}) makes contradictions. So $s^{\prime}=0$. Now we have
\begin{eqnarray*}
s^{\prime}=0,s\geq1,t=t^{\prime}+1>3.
\end{eqnarray*}
By (\ref{wqy212}) we obtain a set of (at least two) equations
\begin{eqnarray}\label{wqy912}
2a_{m}&=&xma_{m}^{\prime}\nonumber ;\\\nonumber
2a_{2m}&=&-ma_{m}^{\prime}+2xa_{2m}^{\prime};\\\nonumber
&\vdots&\\
2a_{tm}&=&(t-3)ma_{(t-1)m}^{\prime}.
\end{eqnarray}
Using the same arguments again, (\ref{wqy912}) makes contradictions. So $t^{\prime}=1$ or $2$.  Now we have

\noindent{\bf Case 2}: $t^{\prime}=2$.

If $t^{\prime}=2$ we see that $t<t^{\prime}+1=3$, then the coefficient of $G_{3m}$ is zero.  It also gets a contradiction.

\noindent{\bf Case 3}: $t^{\prime}=1$.

In this case $t=2$, and then
 \begin{eqnarray*}\label{wqh91}
2a_{m}&=&xma_{m}^{\prime}\nonumber ;\\ \nonumber
2a_{2m}&=&-ma_{m}^{\prime}.
\end{eqnarray*}
 Using the same arguments again, which also gets a contradiction.

Combining with Cases 1-3, we get that $s=t=1$. The lemma follows.
\end{proof}

\begin{lemma}\label{mainlemS2}
Let $\Delta$ be a local derivation on $\mathcal{S}$ such that $\Delta(G_{0})=\Delta(G_{1})=0$. Then $\Delta( G_{m})=0$ for any $m\in\mathbb{Z}$.
\end{lemma}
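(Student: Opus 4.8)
The plan is to bootstrap from the two previously established facts: $\Delta(G_0)=\Delta(G_1)=0$ together with Lemma~\ref{mainlemS1}, which guarantees $\Delta(G_m)=a_m G_m$ for each $m\in\mathbb Z^*$. So the entire task reduces to showing that the scalar $a_m$ vanishes for every $m$, and then separately checking $\Delta(G_0)=0$ (which is hypothesized) so that only $m\ne 0,1$ needs work. The natural device, mirroring the argument of Lemma~\ref{mainlem2}, is to feed $\Delta$ the element $G_m+G_1$ (or more generally $G_m+xG_1$ with $x\in\mathbb C^*$) and exploit locality.

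First I would fix $m\ge 2$. By Lemma~\ref{mainlemS1} applied to $G_m$ and also to $G_m+G_1$ (which still lies in the span of $G$'s since $\Delta$ is linear and $\Delta(G_1)=0$), there is a derivation $D$ of $\mathcal S$ — hence, by Lemma~\ref{lem2.4}, an inner derivation $D=\mathrm{ad}(u)$ for some $u=\sum_{i\in I}a_i'L_i+\sum_{j\in J}b_j'G_j$ with $I,J$ finite — such that
\begin{eqnarray*}
a_m G_m=\Delta(G_m)=\Delta(G_m+G_1)=[u,\,G_m+G_1].
\end{eqnarray*}
Computing the bracket, the $G$-components of $[u,G_m+G_1]$ come only from the $[L_i,G_r]$ terms and the $L$-components only from the $[G_j,G_s]$ terms. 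Matching degrees: the right-hand side is supported on $G_m$ alone, so all $L$-contributions must cancel and the surviving $G$-contributions pin down which indices in $I,J$ can be nonzero — one finds $I,J\subseteq\{0,1,m\}$ exactly as in Lemma~\ref{mainlem2}. Writing out the handful of structure-constant equations coming from $[a_0'L_0+a_1'L_1+a_m'L_m+b_0'G_0+b_1'G_1+b_m'G_m,\,G_m+G_1]$ and demanding that the $L_{1+m}$, $L_{2m}$, $L_1$, $L_m$ coefficients vanish while the $G_m$ coefficient equals $a_m$, a short linear computation forces $a_m=0$. The case $m<0$ (with $m\ne 0$) is handled identically, testing $\Delta$ on $G_m+G_{-1}$ or again on $G_m+G_1$ and reading off the analogous equations; note $G_{-1}$ or $G_1$ is available since $\epsilon=0$ means all half-integer-free indices are integers. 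Hence $\Delta(G_m)=0$ for all $m\in\mathbb Z^*$, and $\Delta(G_0)=0$ by hypothesis, completing the proof.

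The step I expect to be the main obstacle is the index bookkeeping in the bracket computation: one must be careful that when $u$ has a $G_m$ component, the bracket $[G_m,G_m]=2L_{2m}$ and $[G_m,G_1]=2L_{m+1}$ produce $L$-terms whose cancellation is what actually eliminates $b_m'$ and $b_1'$, and only after that do the $[L_i,G_r]$ terms give the relations forcing $a_m=0$. A second subtlety is the degenerate sub-cases where some of $m$, $m+1$, $2m$, or $1$ coincide (e.g. there is no genuine collision for $m\ge 2$, but one should note $2m\ne m+1$ precisely because $m\ne 1$); these overlap conditions are exactly why the lemma needs $m\ne 1$, and they should be stated rather than swept aside. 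Everything else is routine linear algebra over $\mathbb C$ of the same flavor already used in Lemmas~\ref{mainlemS}--\ref{mainlemS1}.
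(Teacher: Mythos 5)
Your overall strategy coincides with the paper's: use Lemma \ref{mainlemS1} to write $\Delta(G_m)=a_mG_m$, evaluate $\Delta$ at $G_m+G_1$ using linearity and $\Delta(G_1)=0$, realize the local derivation at that point as $\mathrm{ad}(u)$ with $u=\sum_{i\in I}a_i'L_i+\sum_{j\in J}b_j'G_j$, and compare coefficients. But the pivotal bookkeeping step is wrong as you state it: matching degrees against the target $a_mG_m$ does \emph{not} yield $I,J\subseteq\{0,1,m\}$ ``exactly as in Lemma~\ref{mainlem2}''; that index set belongs to the bracket with $L_m+L_1$, not with $G_m+G_1$. Here $[L_i,G_m+G_1]=(\tfrac i2-m)G_{i+m}+(\tfrac i2-1)G_{i+1}$ and $[G_j,G_m+G_1]=2L_{j+m}+2L_{j+1}$, and the relevant restriction (the one the paper uses) is $J=\emptyset$ and $I\subseteq\{0,m-1\}$: the only $L$-indices that can contribute to the $G_m$-coefficient are $i=0$, via $[L_0,G_m]=-mG_m$, and $i=m-1$, via $[L_{m-1},G_1]=\tfrac{m-3}{2}G_m$. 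Your set $\{0,1,m\}$ includes indices ($1$ and $m$) that neither produce a $G_m$-term nor are annihilated by $\mathrm{ad}(\cdot)(G_m+G_1)$, and it omits $m-1$, which is precisely the second source of a $G_m$-contribution that must be killed before one can conclude $a_m=0$. Moreover, the system you propose to solve (vanishing of the $L_{m+1},L_{2m},L_1,L_m$ coefficients plus the $G_m$-coefficient equal to $a_m$) does not force $a_m=0$ even inside your own ansatz: what actually kills $a_0'$ is the vanishing of the $G_1$-coefficient (coming from $[L_0,G_1]=-G_1$, using $m\neq1$), and what kills $a_{m-1}'$ is the vanishing of the $G_{2m-1}$-coefficient (coming from $[L_{m-1},G_m]=-\tfrac{m+1}{2}G_{2m-1}$); only then does $a_m=-m\,a_0'+\tfrac{m-3}{2}a_{m-1}'=0$ follow, which is the paper's ``simple calculation.''

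Two smaller points. For $m<0$ you cannot test $\Delta$ on $G_m+G_{-1}$, since $\Delta(G_{-1})$ is not among the hypotheses (only $\Delta(G_0)=\Delta(G_1)=0$ are assumed); one simply reuses $G_m+G_1$, as the paper does. Finally, like the paper, you assert rather than prove the support restriction on $u$; a fully rigorous justification requires the maximal/minimal-index (chain) argument along the residue class of the indices modulo $m-1$, with extra care when $m=2,3$, where some structure constants $\tfrac i2-m$ or $\tfrac i2-1$ vanish. Since the paper is equally terse on this, I only flag it, but if you repair the index set as above your argument becomes the paper's proof.
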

\begin{proof}
If $m\geq2$, by Lemma \ref{mainlemS1}, there exist $d_{m}\in\mathbb{C}$ and
\begin{eqnarray*}
\sum\limits_{i\in I}a'_iL_{i}+\sum\limits_{j\in J}b'_{j}G_{j}\in {\mathcal S},
\end{eqnarray*}
where $a_i', b_j'\in\mathbb C$ for any $i\in I, j\in J$, and $I, J$ are finite subsets of $\mathbb Z$, such that
\begin{eqnarray}\label{w1}
d_{m}G_{m}\nonumber&=&\Delta( G_{m})=\nonumber\Delta( G_{m}+ G_{1})\\\nonumber
&=&\nonumber[\sum\limits_{i\in I}a'_iL_{i}+\sum\limits_{j\in J}b'_{j}G_{j}, G_{m}+G_{1}].
\end{eqnarray}
Clearly $I\subset\{0, m-1\}$ and $J=\emptyset$. By simple calculations we have $d_m=0$. Similarly, if $m<0$, we can also get $c_{m}=d_{m}=0$. The proof is completed.
\end{proof}

\section{Local derivation on the super Virasoro algebras}

\begin{lemma}\label{mainlemS3}
Let $\Delta$ be a local derivation on $\mathcal{S}$ such that $\Delta(L_{0})=\Delta(L_1)=0$. Then $\Delta( G_0)=0$.
\end{lemma}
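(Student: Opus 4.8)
The statement to prove is: if $\Delta$ is a local derivation on $\mathcal{S}$ with $\Delta(L_0)=\Delta(L_1)=0$, then $\Delta(G_0)=0$. The plan is to exploit the interaction between $G_0$ and the $L_m$'s. First I would write out $\Delta(G_0)$ in the general form $\Delta(G_0)=\sum_k(b_kL_k+a_kG_k)$, and then test $\Delta$ on carefully chosen combinations of $G_0$ with elements on which $\Delta$ is already known to vanish. Since $\Delta(L_0)=\Delta(L_1)=0$, and more importantly by Lemma \ref{mainlem2} we have $\Delta(L_m)=0$ for \emph{all} $m\in\mathbb{Z}$, the combinations $G_0+L_m$ are very useful: applying the local-derivation property at $G_0+L_m$ gives an inner derivation ${\rm ad}(u_m)$ with ${\rm ad}(u_m)(G_0+L_m)=\Delta(G_0+L_m)=\Delta(G_0)$. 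Writing $u_m=\sum_i(\alpha_iL_i+\beta_iG_i)$, one gets $[u_m,G_0]+[u_m,L_m]=\Delta(G_0)$, and comparing degrees on both sides constrains which $\alpha_i,\beta_i$ can be nonzero and forces relations among them.

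The key observation is a parity/degree argument. Note $[u_m,G_0]$ has the same parity components as $u_m$ with a shift, while $[u_m,L_m]$ shifts degree by $m$; and crucially, $[L_i,G_0]=(i/2)G_i$ and $[G_j,G_0]=2L_j$, so the $L$-part of $\Delta(G_0)$ coming from $[u_m,G_0]$ is $2\sum_j\beta_jL_j$, while the $G$-part is $\sum_i\alpha_i(i/2)G_i$. The plan is to run, for each $m\ne 0$, the same finite-support bookkeeping used in Lemmas \ref{mainlem}--\ref{mainlem2}: the support of $\Delta(G_0)$ is a fixed finite set, the terms produced by ${\rm ad}(u_m)$ are a "staircase" in steps of $m$, and solving the resulting linear recursions (as in equation (\ref{odd11})) with $x$ replaced by a free scaling parameter forces the support of $\Delta(G_0)$ to shrink. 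Concretely I expect that combining the constraints from $G_0+L_1$ and $G_0+L_m$ for a second value of $m$ (say $m=2$ or $m=-1$), together with $\Delta(L_1)=0$, pins down $\Delta(G_0)$ to lie in a very small space, e.g.\ $\Delta(G_0)\in\mathbb{C}L_{0}$-type or $\mathbb{C}G_0$-type candidates, which are then eliminated by one more test element.

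The main obstacle, I expect, is that $G_0$ is even-degree but odd-parity, so unlike the $L_0$ case the bracket $[\,\cdot\,,G_0]$ does not act diagonally in a way that immediately kills everything; one has to carry both the $L$-component and the $G$-component of $\Delta(G_0)$ through the recursions simultaneously, and the coefficient $i/2$ in $[L_i,G_0]=(i/2)G_i$ means the $i=0$ term behaves specially (it contributes nothing to the $G$-part), so the candidate $\alpha_0 L_0$ in $u_m$ needs separate handling. I would isolate the degree-$0$ part of the equation $[u_m,G_0]+[u_m,L_m]=\Delta(G_0)$ first to control $b_0$ and $a_0$, then use the nonzero-degree parts and the scaling trick to kill the rest. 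A clean finish is to observe that once $\Delta(G_0)$ is forced into the span of $\{L_0,G_0\}$ (or similar small set), testing on $G_0+G_1$ (using $\Delta(G_1)$, which by Lemma \ref{mainlemS2} vanishes once we also know $\Delta(G_0)=0$ — so instead one tests directly with an element like $G_{-1}$ or $L_1+G_0$) yields $\Delta(G_0)=0$. I would remark that this is the analogue, with roles of $L$ and $G$ exchanged, of the argument in Lemma \ref{mainlemS}, and cite that lemma's recursion technique rather than repeating the computation.
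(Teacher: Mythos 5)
Your overall strategy --- write $\Delta(G_0)$ in general form and test it against even elements on which $\Delta$ is already known to vanish --- is the same one the paper uses, though the paper does the reduction more economically: it only uses the one-parameter family $G_0+xL_0$, whose coefficient equations $a_i=ixa_i'+2b_i'$, $b_i=\tfrac{i}{2}ia_i'\,$-type relations give $a_i-2xb_i=2(1-ix^2)b_i'$, so choosing $x=\pm\sqrt{i^{-1}}$ kills every component at once except a possible $a_0L_0$; no staircase recursion over $G_0+L_m$ (and no appeal to Lemma~\ref{mainlem2}) is needed for that part. Your plan up to the point ``$\Delta(G_0)$ lies in a small space containing $L_0$'' is plausible, although none of the recursions are actually carried out.

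The genuine gap is in your finish, i.e.\ in eliminating the residual $L_0$-component, and the specific test elements you propose cannot do it. For any \emph{even} $v\in\mathcal{S}$ and any $a_0\in\mathbb{C}$ one has, by super skew-symmetry, $[G_0+v,G_0+v]=[G_0,G_0]+[G_0,v]+[v,G_0]+[v,v]=2L_0$ in $\mathcal{S}$, hence $\mathrm{ad}\bigl(\tfrac{a_0}{2}(G_0+v)\bigr)(G_0+v)=a_0L_0$. Consequently the condition $\Delta(G_0+v)\in[\mathcal{S},G_0+v]$ is satisfied by $\Delta(G_0)=a_0L_0$ for every $a_0$ and every even $v$: testing at $G_0$ itself, at $G_0+L_1$, or at $G_0+xL_m$ for any $m$ and $x$ yields no contradiction. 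Your fallback suggestions do not repair this: testing at $G_{-1}$ constrains $\Delta(G_{-1})$, not $\Delta(G_0)$, and testing at $G_0+G_1$ needs information about $\Delta(G_1)$ that is unavailable here, since Lemmas~\ref{mainlemS}--\ref{mainlemS2} all assume $\Delta(G_0)=0$ (the circularity you yourself noticed). Note that the paper's own closing sentence (``using $a_0L_0=[u,G_0+L_1]$ we get $a_0=0$'') is exactly such an even-element test and meets the same obstruction, so this step genuinely requires a different idea --- for instance, first constraining the possible form of $\Delta(G_r)$ for some $r\neq 0$ \emph{without} assuming $\Delta(G_0)=0$, and then exploiting a mixed odd test element such as $G_0+xG_r$ --- rather than another run of the recursion technique with even perturbations.
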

\begin{proof}
Suppose that
\begin{eqnarray*}
\Delta(G_0)=\sum_{i\in I}a_iL_i+\sum_{j\in J}b_jG_j, \label{supposes1}
\end{eqnarray*}
where $a_i, b_j\in\mathbb C^*$ for any $i\in I, j\in J$, and $I, J$ are finite subsets of $\mathbb Z$.

Now $\Delta(G_0)=\Delta(G_0+xL_0)=[a_x, G_0+xL_0]$ for some $a_x=\sum_{i\in I'}a_i'L_i+\sum_{j\in J'}b_j'G_j\in \mathcal{S}$, where $a_i', b_j'\in\mathbb C$ for any $i\in I', j\in J'$, and $I', J'$ are finite subsets of $\mathbb Z$. Then
\begin{eqnarray*}
\Delta(G_0)&=&\sum_{i\in I}a_iL_i+\sum_{j\in J}b_jG_j\\
&=&\Delta(G_0+xL_0)\\&=&\sum_{j\in J'}2b_j'L_j+\sum_{i\in I'}ixa_i'L_i+\frac12\sum_{i\in I'}ia_i'G_i+\sum_{j\in J'}jxb_j'G_j.
\end{eqnarray*}

So $I=J$ and
\begin{eqnarray}\label{eqs2}
a_i=ixa_i'+2b_i', \ \
b_i=\frac12ia_i'+ixb_i'\end{eqnarray}
 for any $i\in I$.

Then we have $a_i-2xb_i=2(1-ix^2)b_i'$.
For $i\ne 0$,  choosing $x=\pm \sqrt{i^{-1}}$ for $i\in I$ repeatedly, we can get $a_i\pm \sqrt{i^{-1}} 2b_i=0$  for any $i\in I$. Then $a_i=b_i=0$ for any $0\ne i\in I$.

For $i=0$, we have $b_0=0$ by \eqref{eqs2}. So $\Delta(G_0)=a_0L_0$ for some $a_0\in\mathbb C$.

Using $a_0L_0=\Delta(G_0)=\Delta(G_0+L_1)=[u, G_0+L_1]$ for some $u\in S$, we can get $a_0=0$ by some easy calculations as above.
\end{proof}

\begin{lemma}\label{mainlemS4}
Let $\Delta$ be a local derivation on $\mathcal{S}$ such that $\Delta(L_1)=0$ and $\Delta(G_1)=aG_1$ for some $a\in\mathbb C$. Then $\Delta( G_1)=0$.
\end{lemma}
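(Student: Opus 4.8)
The goal is to prove $a=0$. Recall that derivations of $\mathcal{S}$ are inner (the quotient analogue of Lemma~\ref{lem2.4}, already used implicitly in Section~3), and that in the situation where this lemma is applied $\Delta$ annihilates every $L_m$ — indeed $\Delta(L_0)=\Delta(L_1)=0$ gives $\Delta(L_m)=0$ for all $m$ by Lemma~\ref{mainlem2}, so in particular $\Delta(L_{-1})=0$. The plan is to exploit the local derivation property on the one-parameter family $G_1+xL_{-1}$, $x\in\mathbb{C}^{*}$: it produces a finitely supported $u_x=\sum_i\alpha_iL_i+\sum_j\beta_jG_j\in\mathcal{S}$ with $[u_x,\,G_1+xL_{-1}]=\Delta(G_1+xL_{-1})=aG_1$. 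The reason for pairing $G_1$ with the \emph{lower} mode $L_{-1}$ is crucial: $\mathrm{ad}(L_i)$ and $\mathrm{ad}(G_j)$ raise the index on $G_1$ but lower it on $L_{-1}$, so the coefficient equations genuinely couple successive indices; pairing $G_1$ with $L_1$ (or $L_0$, $G_0$) instead yields a system that decouples index by index and is solvable for every $a$, hence gives nothing.

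Next I would expand the bracket using Definition~\ref{def2} (central terms vanish in $\mathcal{S}$) and compare coefficients of $L_k$ and $G_k$, obtaining for every $k\in\mathbb{Z}$ the two relations $2\beta_{k-1}+x(k+2)\alpha_{k+1}=0$ and $\tfrac{k-3}{2}\alpha_{k-1}+x\bigl(k+\tfrac32\bigr)\beta_{k+1}=a\,\delta_{k,1}$. Using the first (with $k$ replaced by $k+2$) to eliminate $\beta_{k+1}$ from the second produces the single recursion $(k-3)\alpha_{k-1}-\tfrac{x^2}{2}(2k+3)(k+4)\,\alpha_{k+3}=2a\,\delta_{k,1}$, which is homogeneous for all $k\neq1$ and links indices four apart; at $k=1$ it reads $-2\alpha_0-\tfrac{25x^2}{2}\alpha_4=2a$. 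I would then run the standard finiteness argument along the arithmetic progression $\{\dots,\alpha_{-4},\alpha_0,\alpha_4,\alpha_8,\dots\}$ singled out by $k\equiv1\pmod4$: along it the coefficients $(k-3)$, $(2k+3)$, $(k+4)$ never vanish (their zeros $k=3,\ -\tfrac32,\ -4$ are incompatible with $k\equiv1\pmod4$), so every entry of the progression is a nonzero scalar multiple of the next; since $u_x$ has finite support the whole progression vanishes, in particular $\alpha_0=\alpha_4=0$, and the $k=1$ relation forces $2a=0$, i.e.\ $\Delta(G_1)=aG_1=0$.

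The main obstacle is the bookkeeping rather than a deep idea: one must expand the super-brackets with the correct signs and index shifts, justify that eliminating the odd part $\{\beta_j\}$ is legitimate (which it is precisely because $L_{-1}$ is index-lowering, so the $L$- and $G$-coefficient equations are genuinely linked), and — the delicate point — verify that none of the recursion coefficients degenerates at an index in the chosen residue class, since a single vanishing coefficient there would break the chain and destroy the conclusion (this is exactly what goes wrong if one pairs $G_1$ with $L_2$ instead of $L_{-1}$).
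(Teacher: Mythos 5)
Your computation with $G_1+xL_{-1}$ is internally correct: the two coefficient families, the elimination of the $\beta$'s, the recursion $(k-3)\alpha_{k-1}-\tfrac{x^2}{2}(2k+3)(k+4)\alpha_{k+3}=2a\,\delta_{k,1}$, and the finite-support argument along the class $k\equiv 1\pmod 4$ all check out. The problem is that it does not prove the lemma as stated, because it relies on $\Delta(L_{-1})=0$, which is not among the hypotheses: the lemma assumes only $\Delta(L_1)=0$ and $\Delta(G_1)=aG_1$. To get $\Delta(L_{-1})=0$ you invoke Lemma \ref{mainlem2}, which also needs $\Delta(L_0)=0$ — an assumption you import from ``the situation where the lemma is applied'' rather than from the statement. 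Without it, $\Delta(G_1+xL_{-1})=aG_1+x\Delta(L_{-1})$ has an unknown second term and the identity $[u_x,G_1+xL_{-1}]=aG_1$ on which your whole recursion rests is unavailable. So what you have proved is a variant of the lemma with the extra hypothesis $\Delta(L_0)=0$; that variant does suffice for the way the lemma is used in Theorem \ref{main}, but it is not the stated result.

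Moreover, the reason you give for refusing to pair $G_1$ with $L_1$ is mistaken, and that pairing is exactly what the paper does. The system for $[u_x,G_1+xL_1]=aG_1$ does decouple index by index, but the decoupled $2\times 2$ block at output index $1$ reads $-xa_0'+2b_0'=0$ and $-a_0'-\tfrac{x}{2}b_0'=a$, hence $a=-a_0'\bigl(1+\tfrac{x^2}{4}\bigr)$; it is ``solvable for every $a$'' only for generic $x$. Since the local-derivation property must hold for every $x\in\mathbb{C}^{*}$, one degenerate value suffices: choosing $x=2\sqrt{-1}$ makes the block singular and forces $a=0$. This is precisely the paper's proof (equations \eqref{eqs3} with $x=2\sqrt{-1}$); it uses only the stated hypotheses and avoids the recursion and finite-support machinery entirely. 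To repair your write-up, either add $\Delta(L_0)=0$ to the hypotheses you allow yourself (and say so), or switch to the $G_1+xL_1$ computation with the special value of $x$.
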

\begin{proof}
For any $x\in\mathbb C^*$, there exists $a_x=\sum_{i\in I} a'_iL_i+\sum_{j\in J} b'_jG_j\in \mathcal S$, where $a'_i, b'_j\in\mathbb C^*$ for any $i\in I, j\in J$,  such that
$\Delta(G_1)=\Delta(G_1+xL_1)=[a_x, G_1+xL_1]$. Then
\begin{eqnarray*}
\Delta(G_1)&=&aG_1=\Delta(G_1+xL_1)\\&=&\sum_{j\in J} 2b_j'L_{j+1}+x\sum_{i\in I} (i-1)a_i'L_{i+1}+\sum_{i\in I} (\frac12i-1)a_i'G_{i+1}+x\sum_{j\in J} (j-\frac12)b_j'G_{j+1}.
\end{eqnarray*}
So we have $I=J$ and
\begin{eqnarray}\label{eqs3}
a=-a_0'-\frac12b_0'x, \  \
0=2b_0'-a_0'x.\end{eqnarray}
Choosing $x=2\sqrt{-1}$ in \eqref{eqs3}, we can get $a=0$.
\end{proof}

Now we can get our main result in this paper.
\begin{theorem}\label{main}
Every local derivation
on  the centerless super Virasoro algebra $\mathcal{S}$ is a derivation.
\end{theorem}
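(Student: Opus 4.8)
The plan is to reduce an arbitrary local derivation $\Delta$ on $\mathcal S$ to the zero derivation after subtracting a suitable inner derivation, using the lemmas of Sections 3 and 4 as the normalization steps. First I would exploit the fact that $L_0$ acts diagonally: since $\Delta(L_0)=D_{L_0}(L_0)=[\mathrm{ad}\,u](L_0)$ for some $u\in\mathcal S$, and $[L_0,\cdot]$ has image in $\bigoplus_{n\ne 0}(\mathbb C L_n\oplus\mathbb C G_n)$, I can write $\Delta(L_0)=\sum_{n\ne 0}(\alpha_n L_n+\beta_n G_n)$; then replacing $\Delta$ by $\Delta-\mathrm{ad}(v)$ for an appropriate $v=\sum_{n\ne0}(\tfrac{\alpha_n}{n}L_n+\tfrac{\beta_n}{n}G_n)$ (note every $n\ne 0$ is invertible), we arrange $\Delta(L_0)=0$ while staying in the class of local derivations (the difference of a local derivation and an inner derivation is again a local derivation). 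So from now on $\Delta(L_0)=0$.

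Next, by Lemma~\ref{mainlem1} we have $\Delta(L_1)=a_1L_1+b_1G_1$ for some $a_1,b_1\in\mathbb C$. The derivation $\mathrm{ad}(a_1L_0)$ sends $L_1\mapsto -a_1L_1$ and fixes $L_0\mapsto 0$, so after subtracting it I may assume $\Delta(L_1)=b_1G_1$; similarly $\mathrm{ad}(cG_0)$ for a suitable scalar $c$ (using $[G_0,L_1]=-\tfrac12 G_1$) kills the $G_1$-component while preserving $\Delta(L_0)=0$ up to adjusting a further inner term — one must check that these successive modifications can be made simultaneously, i.e. that the inner derivation used at each stage does not disturb the earlier normalizations, which is where care is needed. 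In any case, after finitely many such corrections we reach $\Delta(L_0)=\Delta(L_1)=0$. Lemma~\ref{mainlem2} then gives $\Delta(L_m)=0$ for all $m\in\mathbb Z$.

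With $\Delta$ vanishing on the whole Virasoro part, Lemma~\ref{mainlemS3} yields $\Delta(G_0)=0$. Then Lemma~\ref{mainlemS1} applies (with $\Delta(G_0)=0$) to give $\Delta(G_1)=a\,G_1$ for some $a\in\mathbb C$; since we already have $\Delta(L_1)=0$, Lemma~\ref{mainlemS4} forces $a=0$, so $\Delta(G_1)=0$. Finally Lemma~\ref{mainlemS2} (applicable since $\Delta(G_0)=\Delta(G_1)=0$) gives $\Delta(G_m)=0$ for all $m$. Thus the normalized $\Delta$ annihilates the basis $\{L_m,G_r\}$ of $\mathcal S$, hence $\Delta=0$; unwinding the normalization, the original local derivation equals the sum of the inner derivations we subtracted, hence is a derivation (indeed inner), which is the claim.

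The main obstacle I anticipate is bookkeeping in the normalization step: each time I subtract an inner derivation to kill one component of $\Delta$ on $L_0$, $L_1$, $G_0$, $G_1$ in turn, I must verify that the subtraction does not reintroduce nonzero values on the elements already normalized, or else arrange the order and the scalars so that the combined inner derivation achieves $\Delta(L_0)=\Delta(L_1)=\Delta(G_0)=\Delta(G_1)=0$ all at once. This is a finite linear-algebra compatibility check — one writes the general inner derivation $\mathrm{ad}(\sum c_iL_i+\sum d_jG_j)$ restricted to $\{L_0,L_1,G_0,G_1\}$ and solves for the coefficients — but it is the one place where the argument is not a direct citation of a previous lemma, so it deserves to be spelled out carefully rather than waved through.
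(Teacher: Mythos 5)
Your proposal is correct and follows the paper's own proof essentially step for step: normalize $\Delta$ by an inner derivation so that $\Delta(L_0)=0$, use Lemma~\ref{mainlem1} together with a correction by multiples of $\mathrm{ad}\,L_0$ and $\mathrm{ad}\,G_0$ to arrange $\Delta(L_1)=0$, and then invoke Lemmas~\ref{mainlem2}, \ref{mainlemS3}, \ref{mainlemS1}, \ref{mainlemS4}, \ref{mainlemS2} in the same order to conclude that the normalized map kills the whole basis, hence the original local derivation is (inner, so) a derivation. The compatibility worry you raise is immediate rather than delicate: both $\mathrm{ad}(L_0)$ and $\mathrm{ad}(G_0)$ annihilate $L_0$ (since $[L_0,L_0]=[G_0,L_0]=0$), which is exactly how the paper's correction $\Delta+c\,\mathrm{ad}\,L_0+2d\,\mathrm{ad}\,G_0$ preserves $\Delta(L_0)=0$ --- just note the sign, i.e.\ one adds $a_1\,\mathrm{ad}\,L_0$ rather than subtracts it.
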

\begin{proof}
Let $\Delta$ be a local derivation on $\mathcal{S}$.  There exists $y\in \mathcal{S}$ such that $\Delta(L_{0})=[y, L_{0}]$. Replaced $\Delta$ by $\Delta-\mathrm{ad}(y)$,  we can get $\Delta(L_{0})=0$.

By Lemma \ref{mainlem1}, there exist $c,d\in\mathbb{C}$ such that
\begin{eqnarray*}
\Delta(L_{1})=cL_{1}+dG_1.
\end{eqnarray*}
Replaced $\Delta$ by $\Delta+c \mathrm{ad} L_{0}+2d {\rm ad} G_0$, we have
\begin{eqnarray*}
\Delta(L_{0})=0,\Delta(L_{1})=0.
\end{eqnarray*}
By Lemma \ref{mainlem2},  we have
\begin{eqnarray*}
\Delta(L_{m})=0,\forall m\in\mathbb{Z}.
\end{eqnarray*}

By Lemma \ref{mainlemS3}, we get $\Delta(G_0)=0$.
So by Lemma \ref{mainlemS1} we can suppose that $\Delta(G_1)=aG_1$. By Lemma \ref{mainlemS4}, we get $\Delta(G_1)=0$.
By Lemma \ref{mainlemS2}, we get $\Delta(G_m)=0$ for all $m\in\mathbb Z$.
 The proof is completed.
\end{proof}

\begin{corollary}
Every local derivation on  the super Virasoro algebra ${\rm SVir}[0]$ is a derivation.
\end{corollary}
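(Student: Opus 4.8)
The plan is to deduce the statement for ${\rm SVir}[0]$ from Theorem~\ref{main} by handling the one-dimensional center $\mathbb{C}C$ separately. Write $V:={\rm SVir}[0]$ and recall $\mathcal{S}=V/\mathbb{C}C$ with quotient map $\pi\colon V\to\mathcal{S}$. Given a local derivation $\Delta\colon V\to V$, the first step is to check that $\Delta$ descends to a well-defined local derivation $\bar\Delta$ on $\mathcal{S}$. For this I would first show $\Delta(C)=0$: since $C$ lies in the center, every derivation of $V$ kills $C$ (this follows from Lemma~\ref{lem2.4}, as $[u,C]=0$ for all $u$), hence $\Delta(C)=D_C(C)=0$. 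Next, I claim $\Delta(\mathbb{C}C)\subseteq\mathbb{C}C$ is automatic (it is $0$), but one still needs $\pi(\Delta(x))$ to depend only on $\pi(x)$; equivalently, that $\Delta(C)=0$, which we just proved, so $\bar\Delta(\pi(x)):=\pi(\Delta(x))$ is well defined and linear. For each $x\in V$ pick the derivation $D_x$ with $\Delta(x)=D_x(x)$; since $D_x(C)=0$, $D_x$ induces $\bar D_{\pi(x)}\in{\rm Der}(\mathcal{S})$ with $\bar D_{\pi(x)}(\pi(x))=\pi(\Delta(x))=\bar\Delta(\pi(x))$, so $\bar\Delta$ is a local derivation on $\mathcal{S}$. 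By Theorem~\ref{main}, $\bar\Delta$ is a derivation of $\mathcal{S}$, and by Lemma~\ref{lem2.4} it is inner, say $\bar\Delta={\rm ad}(\pi(y))$ for some $y\in V$.

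The second step is to correct $\Delta$ on the nose. Replace $\Delta$ by $\Delta':=\Delta-{\rm ad}(y)$; this is again a local derivation of $V$ (the difference of a local derivation and a derivation is a local derivation, since ${\rm ad}(y)(x)=D'_x(x)$ with the constant choice $D'_x={\rm ad}(y)$, and local derivations are closed under the relevant sums in the sense used throughout the paper). Now $\pi\circ\Delta'=0$, i.e. $\Delta'(V)\subseteq\mathbb{C}C$. It remains to show such a $\Delta'$ is zero, which will finish the proof because then $\Delta={\rm ad}(y)$ is a derivation.

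The third step is to kill the map $\Delta'\colon V\to\mathbb{C}C$. Write $\Delta'(x)=\lambda(x)\,C$ for a linear functional $\lambda\colon V\to\mathbb{C}$. For any $x\in V$ there is a derivation $D_x$, necessarily inner by Lemma~\ref{lem2.4}, say $D_x={\rm ad}(u_x)$, with ${\rm ad}(u_x)(x)=\lambda(x)C$. But ${\rm ad}(u_x)(x)=[u_x,x]$, and for $x$ homogeneous of the standard basis this bracket visibly lies in $\mathbb{C}C$ only if it is forced to: take $x=L_0$; then $[u_0,L_0]\in\mathbb{C}C$ forces, after expanding $u_0=\sum(\alpha_mL_m+\beta_rG_r)+\gamma C$ and using the relations in Definition~\ref{def2}, that the $L_m$- and $G_r$-components of $[u_0,L_0]$ all vanish, and the $C$-component of $[u_0,L_0]$ is $0$ as well (there is no $\delta_{0,0}$ contribution from $[L_0,L_0]$), so $\lambda(L_0)=0$. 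More generally, for $x=L_m$ with $m\ne0$ one computes $[u,L_m]$ has $C$-component $\frac{1}{12}(m^3-m)\alpha_{-m}$ coming from $[L_{-m},L_m]$, but then the $L_0$-component is $-2m\alpha_{-m}$, wait — I would instead argue directly: choose two elements whose $\Delta'$-values are constrained by a single automorphism-compatible derivation. Cleanly: for each basis vector $e\in\{L_m,G_r\}$, the relation $[u_e,e]=\lambda(e)C$ together with $[u_e,e]$ having no $C$-component unless the "opposite mode" coefficient of $u_e$ is nonzero, and that same coefficient then producing a nonzero $L_0$- or $G_0$-component (which must vanish since $[u_e,e]\in\mathbb{C}C$), forces $\lambda(e)=0$. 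Hence $\lambda=0$ on a basis, so $\Delta'=0$. The main obstacle is this last bookkeeping step: verifying that no choice of $u_e$ can produce a nonzero central value $[u_e,e]=\lambda(e)C$ while keeping all non-central components zero. This is a finite linear-algebra computation in a $3$-dimensional (for fixed mode $m$) subspace using the cocycle formulas, and it is where one must be careful with the $\delta$-terms; I expect it to be routine but genuinely needs to be written out mode by mode.
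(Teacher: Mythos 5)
Your proposal is correct, and it supplies precisely the reduction that the paper leaves unwritten (the corollary is stated with no proof after Theorem \ref{main}). Your three steps are the natural route: $\Delta(C)=D_C(C)=0$ since $D_C$ is inner by Lemma \ref{lem2.4} and $C$ is central; hence $\Delta$ descends to a local derivation of $\mathcal{S}$, which by Theorem \ref{main} is a derivation, inner as ${\rm ad}(\pi(y))$; subtracting ${\rm ad}(y)$ leaves a local derivation $\Delta'$ with values in $\mathbb{C}C$, which must vanish. The final ``bookkeeping'' you were cautious about is indeed routine and comes out exactly as you predicted: for $x=L_m$ ($m\neq0$) the only source of a $C$-component in $[u,L_m]$ is the $L_{-m}$-term of $u$, which simultaneously contributes $-2m\alpha_{-m}L_0$; since $[u,L_m]\in\mathbb{C}C$ forces the $L_0$-component to vanish, $\alpha_{-m}=0$ and the central component dies with it; for $x=G_r$ the $G_{-r}$-term contributes $2\beta_{-r}L_0$ alongside the central term, giving the same conclusion; for $x=L_0$ there is no central contribution at all; and $\Delta'(C)=0$ trivially. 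Two minor points to tidy when writing it up: (i) the innerness of derivations of the centerless algebra $\mathcal{S}$ is not literally Lemma \ref{lem2.4} (which concerns ${\rm SVir}[\epsilon]$), but it is exactly what the paper itself uses implicitly at the start of Section 3, so you should cite or verify it in the same way; (ii) your parenthetical sign hesitation about $[L_{-m},L_m]$ is immaterial, since the argument only needs that the $L_0$-coefficient $-2m\alpha_{-m}$ vanishes.
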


\begin{remark}

With the same calculations as above, we can also prove that every local derivation on  the super Virasoro algebra SVir$[\frac12]$ is a derivation.

\end{remark}

\section{Local and 2-Local Automorphisms on the super Virasoro algebras}

In this section we present the main result on local and 2-local automorphisms of the super Virasoro algebras of ${\rm SVir}[\epsilon],\epsilon\in\{0,\frac{1}{2}\}$. Denoted by $\hat{S}:={\rm SVir}[\epsilon]$.

\begin{lemma}\label{lem-local-auto2}
Suppose $\sigma\in {\rm Aut}\,(\hat{S})$ satisfying $\sigma(L_{1})=L_{1}$, then $\sigma=id_{\hat{S}}$.
\end{lemma}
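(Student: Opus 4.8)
The plan is to exploit the explicit description of $\mathrm{Aut}\,(\hat S)$ from Lemma \ref{lem-local-auto1}: any $\sigma\in\mathrm{Aut}\,(\hat S)$ has the form $\sigma(L_m)=\varepsilon a^m L_{\varepsilon m}$, $\sigma(G_r)=\varepsilon^{-1/2}a^r G_{\varepsilon r}$, $\sigma(C)=\varepsilon C$ for some $\varepsilon=\pm1$ and $a\in\mathbb C^*$. So the entire argument reduces to pinning down the two parameters $\varepsilon$ and $a$ from the single hypothesis $\sigma(L_1)=L_1$.

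First I would apply the hypothesis directly: $\sigma(L_1)=\varepsilon a L_{\varepsilon}$. For this to equal $L_1$ we need $\varepsilon=1$ (so that the index $\varepsilon\cdot 1=1$ matches, noting $L_{-1}$ and $L_1$ are linearly independent basis elements) and then $\varepsilon a=a=1$. Hence $\varepsilon=1$ and $a=1$. Substituting back into the formulas of Lemma \ref{lem-local-auto1} gives $\sigma(L_m)=L_m$ for all $m\in\mathbb Z$, $\sigma(G_r)=G_r$ for all $r\in\mathbb Z+\epsilon$, and $\sigma(C)=C$. Since $\{L_m,G_r,C\}$ is a basis of $\hat S$, this says $\sigma=\mathrm{id}_{\hat S}$.

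The only point that deserves care is the matching of indices: I should note explicitly that in $\hat S={\rm SVir}[\epsilon]$ the element $L_{-1}$ is distinct from $L_1$, so the equation $\varepsilon a L_{\varepsilon}=L_1$ genuinely forces $\varepsilon=1$ rather than allowing $\varepsilon=-1$ with a compensating coefficient; there is no relation identifying $L_{-1}$ with a multiple of $L_1$. This is the main (and only real) obstacle, and it is immediate once one recalls that the displayed basis consists of linearly independent vectors. No further computation is needed.
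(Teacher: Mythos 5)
Your argument is correct and is essentially the paper's own proof: the paper also invokes Lemma \ref{lem-local-auto1}, deduces from $\sigma(L_1)=\varepsilon a L_{\varepsilon}=L_1$ that $\varepsilon=1$ and $a=1$, and concludes $\sigma=\mathrm{id}_{\hat S}$. Your added remark about the linear independence of $L_1$ and $L_{-1}$ just makes explicit a point the paper leaves implicit.
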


\begin{proof}
Since $\sigma(L_{1})=L_{1}$,  by \eqref{aut1}, we have
\begin{eqnarray*}
\sigma(L_{1})=\varepsilon aL_{\varepsilon},
\end{eqnarray*}
and thus we can prove that $\varepsilon=1,a=1$. By Lemma \ref{lem-local-auto1}, and thus
$\sigma=id_{\hat{S}}$.
\end{proof}

\begin{theorem}\label{mainaut1}
Every  2-local automorphism
on  the super Virasoro algebra $\hat{S}$ is an automorphism.
\end{theorem}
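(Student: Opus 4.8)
The plan is to show that any 2-local automorphism $\phi$ on $\hat{S}={\rm SVir}[\epsilon]$ agrees with a single automorphism everywhere, by first pinning down its behavior on two well-chosen elements and then using Lemma \ref{lem-local-auto2}. The natural pair to test is $\{L_0, L_1\}$. By definition there is $\theta_{L_0,L_1}\in{\rm Aut}(\hat S)$ with $\phi(L_0)=\theta_{L_0,L_1}(L_0)$ and $\phi(L_1)=\theta_{L_0,L_1}(L_1)$. Composing $\phi$ with $\theta_{L_0,L_1}^{-1}$ (which is again a 2-local automorphism, since the class of 2-local automorphisms is closed under composition with genuine automorphisms on either side) we may assume from the start that $\phi(L_0)=L_0$ and $\phi(L_1)=L_1$.

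Next I would exploit the pair $\{L_1, x\}$ for an arbitrary $x\in\hat S$. For each such $x$ there is $\theta=\theta_{L_1,x}\in{\rm Aut}(\hat S)$ with $\theta(L_1)=\phi(L_1)=L_1$ and $\theta(x)=\phi(x)$. By Lemma \ref{lem-local-auto2}, $\theta(L_1)=L_1$ forces $\theta=\mathrm{id}_{\hat S}$, hence $\phi(x)=x$ for every $x$. This shows $\phi=\mathrm{id}_{\hat S}$, and undoing the initial normalization gives $\phi=\theta_{L_0,L_1}$, an automorphism. So the argument is essentially a two-line reduction once the normalization is set up, with the real content hidden in Lemma \ref{lem-local-auto1} (the explicit form of automorphisms) and Lemma \ref{lem-local-auto2} (rigidity from fixing $L_1$).

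The one point that needs a little care, and which I expect to be the main (admittedly mild) obstacle, is the normalization step: a 2-local automorphism is not assumed linear, so I should verify that $\theta_{L_0,L_1}^{-1}\circ\phi$ is still a 2-local automorphism. This is immediate: for any pair $x,y$, if $\psi$ is an automorphism with $\psi(x)=\phi(x)$, $\psi(y)=\phi(y)$, then $\theta_{L_0,L_1}^{-1}\circ\psi$ is an automorphism sending $x\mapsto(\theta_{L_0,L_1}^{-1}\circ\phi)(x)$ and $y\mapsto(\theta_{L_0,L_1}^{-1}\circ\phi)(y)$. A secondary subtlety is checking that $L_0$ and $L_1$ are ``enough'': one must make sure Lemma \ref{lem-local-auto2} really applies, i.e.\ that fixing $L_1$ alone (not also $L_0$) already pins the automorphism — but that is exactly the content of Lemma \ref{lem-local-auto2}, so nothing more is needed. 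I would also remark that the same proof works verbatim for both $\epsilon=0$ and $\epsilon=\tfrac12$, since Lemmas \ref{lem-local-auto1} and \ref{lem-local-auto2} are stated uniformly in $\epsilon$.
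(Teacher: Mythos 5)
Your proposal is correct and follows essentially the same route as the paper: normalize $\phi$ by composing with the inverse of an automorphism witnessing its value at $L_1$ (the paper uses the pair $(L_1,L_1)$ where you use $(L_0,L_1)$, an immaterial difference, with the extra condition $\phi(L_0)=L_0$ playing no role), then apply Lemma \ref{lem-local-auto2} to $\theta_{L_1,z}$ for arbitrary $z$ to conclude $\phi'=\mathrm{id}_{\hat S}$. Your explicit check that composing with a genuine automorphism preserves the 2-local automorphism property is a point the paper leaves implicit, but the argument is the same.
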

\begin{proof}
Suppose that $\phi$ is an  2-local automorphism of $\hat{S}$. For $L_{1}$, there exists an automorphism $\sigma_{L_{1},L_{1}}$ on $\hat{S}$ such that $\phi(L_{1})=\sigma_{L_{1},L_{1}}(L_{1})$. Let $\phi'=\sigma^{-1}_{L_{1},L_{1}}\circ \phi$, then $\phi'$ is a 2-local automorphism of $\hat{S}$ such that $\phi'(L_{1})=L_{1}$. For any $z\in\hat{S}$, there exists an automorphism $\theta_{L_{1},z}$ on $\hat{S}$ such that
$$L_{1}=\phi'(L_{1})=\theta_{L_{1},z}(L_{1}),\phi'(z)=\theta_{L_{1},z}(z).$$ It follows that $\theta_{L_{1},z}=id_{\hat{S}}$ by Lemma \ref{lem-local-auto2}, and thus $\phi'(z)=z$, i.e. $\phi'=id_{\hat{S}}$. Hence
$$\phi=\sigma_{L_{1},L_{1}}.$$ Therefore $\phi$ is an automorphism of $\hat{S}$.
\end{proof}

\begin{theorem}\label{mainaut2}
Every local automorphism
on  the super Virasoro algebra $\hat{S}$ is an automorphism.
\end{theorem}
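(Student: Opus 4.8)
The plan is to follow the same strategy used for the 2-local case in Theorem \ref{mainaut1}, but I must be more careful because a local automorphism is only assumed to be linear and to agree with \emph{some} automorphism at each single point, so I cannot directly invoke the rigidity Lemma \ref{lem-local-auto2} unless I can first arrange that $\phi$ fixes $L_1$. First I would use that $\phi$ is a local automorphism at $L_1$: there is $\sigma\in{\rm Aut}(\hat S)$ with $\phi(L_1)=\sigma(L_1)$. Replacing $\phi$ by $\sigma^{-1}\circ\phi$ (which is again a linear local automorphism, since the composition of an automorphism with a local automorphism is a local automorphism), I may assume $\phi(L_1)=L_1$. The goal then becomes to show $\phi=\mathrm{id}_{\hat S}$.

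The key point is that by Lemma \ref{lem-local-auto1} the automorphism group is only $2$-parameter (the scalars $\varepsilon=\pm1$ and $a\in\mathbb{C}^\ast$), so each value $\phi(x)=\theta_x(x)$ is highly constrained. For a basis element $L_m$, the condition $\phi(L_m)=\theta_{L_m}(L_m)=\varepsilon_m a_m^{m} L_{\varepsilon_m m}$ forces $\phi(L_m)$ to be a scalar multiple of either $L_m$ or $L_{-m}$; similarly $\phi(G_r)$ is a scalar multiple of $G_r$ or $G_{-r}$, and $\phi(C)=\pm C$. Next I would test $\phi$ on the elements $L_1+L_m$ (and $L_1+G_r$, $G_1+G_r$, etc.). Since $\phi$ is linear, $\phi(L_1+L_m)=L_1+\phi(L_m)$, and this must equal $\theta(L_1)+\theta(L_m)=\varepsilon a L_\varepsilon + \varepsilon a^m L_{\varepsilon m}$ for a single automorphism $\theta$. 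Matching the $L_1$ component against $\varepsilon a L_\varepsilon$ forces $\varepsilon=1$ and $a=1$ for that particular $\theta$ (as in Lemma \ref{lem-local-auto2}), and then the $L_m$ component gives $\phi(L_m)=L_m$. Running this through all $m\in\mathbb Z^\ast$, then through $G_r$ using elements $L_1+G_r$ (here matching the $L_1$ part again pins down $\theta=\mathrm{id}$, hence $\phi(G_r)=G_r$), and finally through $C$, yields $\phi=\mathrm{id}_{\hat S}$ on a basis, hence $\phi=\mathrm{id}_{\hat S}$. Therefore $\phi=\sigma$ is an automorphism.

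The main obstacle I anticipate is the bookkeeping when the test element itself can be moved by an automorphism in more than one way: for $L_1+L_{-1}$ or $G_r+G_{-r}$ the two summands are swapped by the $\varepsilon=-1$ automorphisms, so matching coefficients must be done carefully to rule out the ``flip'' possibility; one resolves this by also testing against a third element such as $L_1+2L_m$ or by using $L_1+L_2$ first to fix the situation at $L_1$ before handling $L_{-1}$. A second minor technical point is handling the center: since $\phi$ is linear and $C$ is central, one tests on $L_1+C$ and uses $\phi(C)=\pm C$ together with the already-established $\phi(L_1)=L_1$ to force $\phi(C)=C$. Once $\phi$ is shown to fix every basis vector, linearity finishes the argument with no further work.
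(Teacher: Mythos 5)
Your proposal is correct and follows essentially the same route as the paper: normalize so that $\phi(L_1)=L_1$, then apply $\phi$ to $x+L_1$ for each basis element $x$ and use linearity together with Lemma \ref{lem-local-auto1} and Lemma \ref{lem-local-auto2} to pin the witnessing automorphism down to the identity. The only cosmetic difference is at $L_{-1}$: the paper simply checks that the ``flip'' automorphism also forces $\phi(L_{-1})=L_{-1}$, so no third test element is actually needed.
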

\begin{proof}
Suppose that $\phi$ is a local automorphism of $\hat{S}$, For $L_{1}$, there exists an automorphism $\sigma_{L_{1}}$ on $\hat{S}$ such that $\phi(L_{1})=\sigma_{L_{1}}(L_{1})$. Let $\phi'=\sigma^{-1}_{L_{1}}\circ\phi$, then $\phi'$ is a local automorphism of $\hat{S}$ such that $\phi'(L_{1})=L_{1}$.

For any $G_{r}$ with $r\in \mathbb{Z}+\epsilon$, there exist automorphisms $\theta_{G_{r}}$ and  $\theta_{G_{r}+L_{1}}$ of $\hat{S}$ such that $$\phi'(G_{r})=\theta_{G_{r}}(G_{r}),\phi'(G_{r}+L_{1})=\theta_{G_{r}+L_{1}}(G_{r}+L_{1}).$$
Then we have
\begin{eqnarray}\label{aut5}
\theta_{G_{r}}(G_{r})+L_{1}&=&\phi'(G_{r})+L_{1}=\phi'(G_{r})+\phi'(L_{1})\nonumber\\
&=&\phi'(G_{r}+L_{1})=\theta_{G_{r}+L_{1}}(G_{r}+L_{1})\nonumber\\
&=&\theta_{G_{r}+L_{1}}(G_{r})+\theta_{G_{r}+L_{1}}(L_{1}).\nonumber
\end{eqnarray}
 By Lemma \ref{lem-local-auto1} we deduce that $\theta_{G_{r}+L_{1}}(L_{1})=L_{1}$. Therefore $\theta_{G_{r}+L_{1}}=id_{\hat{S}}$ by Lemma \ref{lem-local-auto2}, and thus $\phi'(G_{r})=\theta_{G_{r}+L_{1}}(G_{r})=G_{r}, r\in \mathbb{Z}+\epsilon.$

Similarly, by substituting $G_{r}$ with $C$ or $L_{m}$ where $m\neq\pm1$, we deduce that $\phi'(C)=C$ and $\phi'(L_{m})=L_{m}, m\neq\pm1.$

 Next, we consider the case when $m=-1$. There exist automorphisms $\theta_{L_{-1}}$ and  $\theta_{L_{-1}+L_{1}}$ of $\hat{S}$ such that $$\phi'(L_{-1})=\theta_{L_{-1}}(L_{-1}),\phi'(L_{-1}+L_{1})=\theta_{L_{-1}+L_{1}}(L_{-1}+L_{1}).$$
Then we have
 \begin{eqnarray}\label{aut6}
\theta_{L_{-1}}(L_{-1})+L_{1}&=&\phi'(L_{-1})+L_{1}=\phi'(L_{-1})+\phi'(L_{1})\nonumber\\
&=&\phi'(L_{-1}+L_{1})=\theta_{L_{-1}+L_{1}}(L_{-1}+L_{1})\nonumber\\
&=&\theta_{L_{-1}+L_{1}}(L_{-1})+\theta_{L_{-1}+L_{1}}(L_{1}).\nonumber
\end{eqnarray}
 By Lemma \ref{lem-local-auto1}, if
 \begin{eqnarray*}
\theta_{L_{-1}+L_{1}}(L_{1})=L_{1},
\end{eqnarray*}
 then $\theta_{L_{-1}+L_{1}}=id_{\hat{S}}$, so $\phi'(L_{-1})=\theta_{L_{-1}+L_{1}}(L_{-1})=L_{-1}$.
If
 \begin{eqnarray*}
\theta_{L_{-1}+L_{1}}(L_{-1})=L_{1},
\end{eqnarray*}
 then $\theta_{L_{-1}+L_{1}}(L_{1})=L_{-1}$ by Lemma \ref{lem-local-auto1}, so $\phi'(L_{-1})=\theta_{L_{-1}+L_{1}}(L_{1})=L_{-1}$.

Therefore
\begin{eqnarray*}
\phi'(L_{m})=L_{m},m\in\mathbb{Z}.
\end{eqnarray*}

So $\phi'=id_{\hat{S}}$, and thus $\phi=\sigma_{L_{1}}$. Therefore $\phi$ is an automorphism.

\end{proof}

\section*{Acknowledgments}
This work is partially supported by the NNSF (Nos. 12071405, 11971315, 11871249), the Innovation Project of Excellent Doctoral Students of Xinjiang University, China (Grant No. XJU2023BS019).

\end{document}